\numberwithin{equation}{section}
\theoremstyle{plain}
\newtheorem{Theorem}{Theorem}[section]
\newtheorem{Corollary}[Theorem]{Corollary}
\newtheorem{Lemma}[Theorem]{Lemma}
\newtheorem{Proposition}[Theorem]{Proposition}
\theoremstyle{definition}
\newtheorem{Definition}[Theorem]{Definition}
\newtheorem{Example}[Theorem]{Example}
\theoremstyle{Remark}
\newtheorem{Remark}[Theorem]{Remark}
\newcommand{\lar}{\longrightarrow}
\def\KK{{\mathbb C}}
\def\RR{{\mathbb R}}
\def\CC{{\mathbb C}}
\def\ker{{\rm ker}\,}
\def\O{{\mathcal O}}
\def\cl#1{{\mathcal #1}}
\def\VaVa{{\cl V}\kern-5pt {\cl V}}
\def\swar#1{\swarrow
	
	\rlap{$\vcenter{\hbox{$\scriptstyle#1$}}$}}
\def\sear#1{\searrow
	
	\rlap{$\vcenter{\hbox{$\scriptstyle#1$}}$}}
\def\pp{{\mathbb P}}
\newcommand{\red}{\color{red}}
\begin{document}
\author{Giorgio Ottaviani}
\address{Dipartimento di Matematica e Informatica ``Ulisse Dini'',  University of Florence, viale Morgagni 67/A, I-50134, Florence, Italy}
\email{giorgio.ottaviani@unifi.it}
\email{zahra.shahidi@unifi.it}

\author{Zahra Shahidi}

\subjclass[2000]{14N07, 14N05, 14N10, 15A69, 15A18}
\keywords{eigenvectors, tensors, singular tuples, vector bundles, Chern classes}

\title{Tensors with eigenvectors in a given subspace}
\maketitle
\begin{abstract}
The first author with B. Sturmfels studied in \cite{OS} the variety of matrices with eigenvectors in a given linear subspace,
called Kalman variety.
We extend that study from matrices to symmetric tensors, proving in the tensor setting the irreducibility of the Kalman variety and computing its codimension and degree.
Furthermore we consider the Kalman variety of tensors having singular t-ples with the first component in a given linear subspace
and we prove analogous results, which are new even in the case of matrices. 
Main techniques come from Algebraic Geometry, using Chern classes for enumerative computations.
\end{abstract}

\section{Introduction}
We introduce the subject of this paper with a basic example. Let $L\subset\KK^n$ be a linear space of dimension $d$.
We are interested in the variety ${\mathcal K}_{d,n,m}(L)$ of matrices $A\in\KK^n\otimes\KK^m$ having a singular pair $(v,w)\in\KK^n\times\KK^m$
with $v\in L$. Namely $A\in {\mathcal K}_{d,n,m}(L)$ if and only if there exists $v\in \KK^n\setminus\{0\}, w\in \KK^m\setminus\{0\}, \lambda_1, \lambda_2\in\KK$ such that $Aw=\lambda_1 v$, $A^tv=\lambda_2 w$ and $v\in L$
(over $\RR$ it is well known that the equality $\lambda_1=\lambda_2$ can be assumed but already on $\CC$
the situation is more subtle, see \cite{CH} and Example \ref{exa:lambdaequal}, note that we have not conjugated the transpose matrix $A^t$). The main Theorem of this paper applies in this case (see Theorem \ref{thm:degree} and Remark \ref{rem:intro}) and it shows that the variety ${\mathcal K}_{d,n,m}(L)$ is algebraic, irreducible, has codimension $n-d$ and it has degree (see Remark \ref{rem:intro})
\begin{equation}\label{eq:degpairs}\sum_{j=0}^{d-1}\sum_{k=d-j-1}^{\min(n-j-1,m-1)}{{n-j-1}\choose k}{k\choose{d-1-j}}.
\end{equation}
For $n\le m$ the expression (\ref{eq:degpairs}) simplifies to
 $2^{n-d}{n\choose{d-1}}$ (see Proposition \ref{prop:mat}), which does not depend on $m$, 
 in other words it stabilizes for $m\gg 0$. Moreover, for $n\le m$, we have that $A\in  {\mathcal K}_{d,n,m}(L)$ 
if and only if the symmetric matrix $AA^t$ has an eigenvector in $L$ (see Lemma \ref{lem:eig}), so that the result is a consequence of 
\cite[Prop. 1.2]{OS}. The paper \cite{OS}, by the first author and B. Sturmfels, was indeed the main source for this paper.
When $n>m$ the condition that $AA^t$ has an eigenvector on $L$ is necessary for $A\in  {\mathcal K}_{d,n,m}(L)$ 
but no more sufficient (see Example \ref{exa:necass}) and \cite[Prop. 1.2]{OS} cannot be invoked anymore.

The formula (\ref{eq:degpairs})  can be expressed as the coefficient of the monomial $$h^{n-d}v^{d-1}w^{m-1}$$ in the polynomial $$\frac{(w+h)^n-v^n}{w+h-v}\cdot\frac{(v+h)^m-w^m}{v+h-w}.$$
Note that both fractions are indeed homogeneous polynomials, respectively of degree $n-1$ and $m-1$. We list the degree
values for small $d, n, m$ in Table 1 in \S 3. The above formulation generalizes to tensors and it  reminds the way we have computed it 
and also
 the formula to compute the number of singular t-ples of a tensors obtained by S. Friedland and the first author in \cite{FO}.
This was interpreted as EDdegree of the Segre variety in \cite{DHOST}.
The techniques we use comes from Algebraic Geometry and corresponds to a Chern class computation of a certain vector bundle.

Singular pairs replace eigenvectors for non symmetric matrices in optimization questions.
Generalizing from matrices to tensors, we have two analogous concepts.
For symmetric tensors in ${\mathrm Sym}^k\CC^n$, eigenvectors \cite{L,Q} are used in best rank one approximation problems,
while for general tensors in $\CC^{n_1}\otimes\ldots\otimes\CC^{n_k}$ $k$-ples of singular vectors\cite{L} play the same role.
Here are our main results.
\begin{Theorem} Let $L\subset \KK^n$ , $\dim L=d$. The variety 
$$\kappa^s_{d,n,k}(L)=\{f\in  \pp(Sym^{k}(\KK^n)) | f\textrm{\ has an eigenvector in\ } L \}$$ is irreducible,  it has codimension $n-d$
and degree $\Sigma_{i=0}^{d-1}\binom{n-d+i}{i}(k-1)^i$  .
\end{Theorem}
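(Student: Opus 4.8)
The plan is to follow the blueprint of \cite{OS}: realise $\kappa^s_{d,n,k}(L)$ as the image of an incidence variety which fibres as a projective bundle over $\pp(L)=\pp^{d-1}$, extract the codimension from a dimension count, and extract the degree from a Chern/Segre class computation on $\pp^{d-1}$. Recall that $v\in\KK^n\setminus\{0\}$ is an eigenvector of $f$ precisely when $\nabla f(v)=(\partial f/\partial x_1,\dots,\partial f/\partial x_n)(v)$ is proportional to $v$. Put $N+1=\dim\mathrm{Sym}^k(\KK^n)$ and form
$$\Phi=\{([f],[v])\in\pp^N\times\pp(L)\ :\ \nabla f(v)\ \text{proportional to}\ v\},$$
so that $\kappa^s_{d,n,k}(L)$ is the image of the first projection $\pi_1\colon\Phi\to\pp^N$. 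For fixed $[v]$ the assignment $f\mapsto\nabla f(v)\bmod\langle v\rangle$ is linear in $f$, and by $\mathrm{GL}_n$-equivariance one needs only test on $v=e_1$ and the monomials $x_jx_1^{k-1}$ to see it is surjective onto $\KK^n/\langle v\rangle$. Letting $[v]$ vary, these maps glue into a surjection of vector bundles on $\pp(L)$,
$$\mathrm{Sym}^k(\KK^n)\otimes\O\ \surjects\ \tilde Q\otimes\O(k-1),\qquad \tilde Q:=\cok\big(\O_{\pp(L)}(-1)\injects\KK^n\otimes\O\big)$$
of rank $n-1$, the twist by $k-1$ recording that $\nabla f$ is homogeneous of degree $k-1$ in $v$. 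Writing $\mathcal E$ for the kernel — a vector bundle of rank $N+2-n$ on $\pp^{d-1}$ — one identifies $\Phi$ with $\pp(\mathcal E)\subset\pp^N\times\pp(L)$, with $\O_{\pp(\mathcal E)}(1)$ equal to the restriction of $\O_{\pp^N}(1)$; thus $\zeta:=c_1(\O_{\pp(\mathcal E)}(1))=\pi_1^*h$ with $h$ the hyperplane class of $\pp^N$. In particular $\Phi$ is irreducible of dimension $(d-1)+(N+1-n)=N-n+d$, hence so is its image $\kappa^s_{d,n,k}(L)$, of dimension $\le N-n+d$. (We assume $d<n$; for $d=n$ one has $\kappa^s_{d,n,k}(L)=\pp^N$.)

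For the codimension it suffices that $\pi_1$ be generically finite: then $\dim\kappa^s_{d,n,k}(L)=\dim\Phi=N-n+d$ and its codimension is $n-d$. For this I would exhibit one $f$ with only finitely many eigenvectors in $L$: a general $f\in\mathrm{Sym}^k(L)\subset\mathrm{Sym}^k(\KK^n)$ has $\nabla f(v)\in L$ for all $v\in L$, so its eigenvectors in $L$ are exactly those of $f$ viewed as a tensor on $L\cong\KK^d$, a finite nonempty set; upper semicontinuity of fibre dimension then forces a general fibre of $\pi_1$ to be finite. To upgrade this to generic injectivity — i.e.\ $\delta:=\deg\pi_1=1$, which is what makes the number below the honest degree and not a multiple of it — I would consider the variety $\Phi^{(2)}$ of triples $([f],[v_0],[v_1])$ with $[v_0]\ne[v_1]$ and $v_0,v_1$ both eigenvectors of $f$ in $L$. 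For $k\ge 3$ the two eigenvector conditions on $f$ are linearly independent (again checked on the monomials $x_jx_1^{k-1}$ after normalising $v_0=e_1$, $v_1=e_2$), so $\dim\Phi^{(2)}=2(d-1)+N-2(n-1)=N+2d-2n<N-n+d$; since $\kappa^s_{d,n,k}(L)$ is irreducible, a general $f$ in it then has a unique eigenvector in $L$, and as we work over $\CC$ the general fibre of $\pi_1$ is a single reduced point, so $\pi_1$ is birational. (For $k=2$ the variety in question is the classical Kalman variety of symmetric matrices and the statement is \cite[Prop.~1.2]{OS}.)

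It remains to compute $\deg\kappa^s_{d,n,k}(L)=\int_\Phi\zeta^{\,N-n+d}$. Since $N-n+d=(\mathrm{rank}(\mathcal E)-1)+(d-1)$, the standard push-forward along $\Phi=\pp(\mathcal E)\to\pp^{d-1}$ identifies this integer with the coefficient of $\xi^{d-1}$ ($\xi$ the hyperplane class of $\pp^{d-1}$) in $c(\mathcal E)^{-1}$; and from $0\to\mathcal E\to\O^{N+1}\to\tilde Q\otimes\O(k-1)\to 0$ we get $c(\mathcal E)^{-1}=c(\tilde Q\otimes\O(k-1))$. Finally, tensoring the defining sequence of $\tilde Q$ by $\O(k-1)$ gives $[\tilde Q\otimes\O(k-1)]+[\O(k-2)]=n\,[\O(k-1)]$ in $K$-theory, whence
$$c(\tilde Q\otimes\O(k-1))=\frac{(1+(k-1)\xi)^n}{1+(k-2)\xi},$$
so $\deg\kappa^s_{d,n,k}(L)$ is the coefficient of $\xi^{d-1}$ in this rational function. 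An elementary generating-function manipulation identifies this coefficient with $\sum_{i=0}^{d-1}\binom{n-d+i}{i}(k-1)^i$; as a check, for $k=2$ it collapses via the hockey-stick identity to $\binom{n}{d-1}$, recovering \cite{OS}.

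I expect the first stage — correctly identifying $\Phi$ with $\pp(\mathcal E)$ for the right bundle, including the $(k-1)$-twist and the matching $\O_{\pp(\mathcal E)}(1)=\pi_1^*\O_{\pp^N}(1)$ — to be the conceptual heart, after which the codimension is a dimension count and the degree is routine Segre-class bookkeeping. The most delicate auxiliary point is the generic injectivity of $\pi_1$, that is, the linear independence of the two eigenvector conditions underlying the bound on $\dim\Phi^{(2)}$.
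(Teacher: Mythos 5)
Your proof is correct, and while it lands on the same bundle-theoretic input (the twisted tautological quotient $Q(k-1)$ and the Chern-class-of-a-twist formula), it is organized differently from the paper's. The paper works on the full product $X=\pp(\mathrm{Sym}^k\KK^n)\times\pp^{n-1}$, constructs $E=p^*\O(1)\otimes q^*Q(k-1)$, identifies the incidence $W$ with the zero locus of a section so that $[W]=c_{n-1}(E)$, and then pairs against $p^*h^{\,\cdot}\cdot q^*v^{n-d}$; the binomial formula $c_j(Q\otimes L)=\sum_i\binom{\mathrm{rk}Q-j+i}{i}c_i(Q)c_1(L)^i$ does the final extraction. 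You instead restrict to $\pp(L)=\pp^{d-1}$ from the outset, exhibit the fibred incidence $\Phi$ as a projective bundle $\pp(\mathcal E)$ over $\pp^{d-1}$ with $\O_{\pp(\mathcal E)}(1)=\pi_1^*\O_{\pp^N}(1)$, and read the degree as a Segre number $\int_\Phi\zeta^{N-n+d}$, i.e.\ the coefficient of $\xi^{d-1}$ in $c(\mathcal E)^{-1}=c(\tilde Q(k-1))=(1+(k-1)\xi)^n/(1+(k-2)\xi)$. This is a cleaner packaging: the product variety $X$ never enters, the codimension drops out of the bundle rank, and the degree is a one-line Segre computation (using $\frac{1}{1+(k-2)\xi}=\sum_{m\ge 0}\xi^m(1+(k-1)\xi)^{-m-1}$ the coefficient is $\sum_{i=0}^{d-1}\binom{n-d+i}{i}(k-1)^i$, matching).

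One place where your writeup is actually more careful than the paper's: the Chern-class count computes the length of a generic fibre of $\pi_1$ over $\kappa^s_{d,n,k}(L)$ times $\deg\kappa^s_{d,n,k}(L)$, so one needs $\pi_1$ to be generically injective, not merely generically finite. The paper establishes generic finiteness (which gives irreducibility and codimension) but takes birationality for granted in passing from $c_{n-1}(E)$ to the degree; your $\Phi^{(2)}$-dimension argument, with the linear-independence check on the monomials $x_jx_1^{k-1}$ and $x_jx_2^{k-1}$ for $k\ge 3$ and the deferral to \cite{OS} for $k=2$, closes that gap explicitly. So the proposal is correct, arguably slightly more complete, and takes a genuinely different (projective-bundle/Segre) route to the same Chern-class input.
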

\begin{Theorem} Let $L\subset \KK^{n_1}$ , $\dim L=d$. The variety  $$ \kappa_{d,n_1,\ldots, n_k}(L)=\{T \in  \pp(\KK^{n_1}\otimes\ldots\otimes \KK^{n_k}) | 
 T\textrm{\ has  a singular\ }k\textrm{-tuple\ } (v_1, \ldots, v_k)\textrm{\ with\ }v_1\in L \}$$
 is irreducible, it has codimension $n_{1}- d$ and degree given by the coefficient of the monomial $h^{n_1-d}v_{1}^{d-1}\prod_{i\geq 2}v_{i}^{(n_{i}-1)}$ in the polynomial
	$$\prod_{i=1}^{k}\frac{[(\widetilde{v_i}+h)^{n_{i}}-v_i^{n_i}]}{(\widetilde{v_i}+h)-v_i}$$
	where $\widetilde{v_i}=\Sigma_{j}v_{j}-v_i.$.
\end{Theorem}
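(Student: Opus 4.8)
The plan is to realize $\kappa_{d,n_1,\ldots,n_k}(L)$ as the image of an incidence variety which is a projective bundle, and to extract irreducibility, codimension and degree from a single Chern class computation, exactly in the spirit of \cite{OS} and \cite{FO}. Set $V=\KK^{n_1}\otimes\cdots\otimes\KK^{n_k}$, $N=\prod_i n_i$, $r=\sum_i(n_i-1)$, and $B=\pp(L)\times\pp(\KK^{n_2})\times\cdots\times\pp(\KK^{n_k})$. On $B$ let $S_i$ be the tautological line subbundle of the trivial bundle with fibre $\KK^{n_i}$ coming from the $i$-th factor (for $i=1$ via $L\injects\KK^{n_1}$), and $Q_i=\KK^{n_i}/S_i$, of rank $n_i-1$. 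Contracting a tensor against all its arguments but the $i$-th, then reducing modulo the $i$-th line, gives a morphism of vector bundles from the trivial bundle $\underline V$ to $\mathcal F_i:=\big(\bigotimes_{j\neq i}S_j^{\vee}\big)\otimes Q_i$, and I would set $Z\subset\pp(V)\times B$ to be the common zero locus, i.e. the set of $(T,([v_1],\ldots,[v_k]))$ for which $(v_1,\ldots,v_k)$ is a singular $k$-tuple of $T$. The first task is to check, by choosing bases adapted to $v_i=e_1^{(i)}$ so that the equations become the vanishing of disjoint sets of coordinates of $T$, that $\underline V\rar\bigoplus_i\mathcal F_i$ is surjective over every point of $B$; this is the routine but necessary step making $Z\rar B$ a projective subbundle of relative dimension $N-1-r$.

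Granting this, $Z$ is a projective bundle over the irreducible variety $B$, hence irreducible, so its image $\kappa_{d,n_1,\ldots,n_k}(L)$ under the projection $\phi\colon\pp(V)\times B\rar\pp(V)$ is irreducible. A dimension count gives $\dim Z=(N-1-r)+\dim B=N-1-(n_1-d)$; since a general tensor has finitely many singular $k$-tuples, $\phi|_Z$ is generically finite onto its image, whose codimension is therefore $n_1-d$.

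For the degree, $Z$ is the zero scheme of a regular section of $\bigoplus_{i=1}^k m_i\otimes Q_i$ on $\pp(V)\times B$, where $m_i=\O_{\pp(V)}(1)\otimes\bigotimes_{j\neq i}S_j^{\vee}$ has $c_1(m_i)=h+\widetilde{v_i}$, with $h$ the hyperplane class of $\pp(V)$, $v_1$ that of $\pp(L)$, $v_i$ that of $\pp(\KK^{n_i})$ for $i\geq 2$, and $\widetilde{v_i}=\sum_{j\neq i}v_j$. Hence $[Z]=c_r\big(\bigoplus_i m_i\otimes Q_i\big)=\prod_{i=1}^k c_{n_i-1}(m_i\otimes Q_i)$, and from $0\rar m_i\otimes S_i\rar m_i\otimes\KK^{n_i}\rar m_i\otimes Q_i\rar 0$ one computes
$$c(m_i\otimes Q_i)=\frac{(1+h+\widetilde{v_i})^{n_i}}{1+h+\widetilde{v_i}-v_i}=\sum_{l\geq 0}v_i^{l}\,(1+h+\widetilde{v_i})^{n_i-1-l},$$
whose degree-$(n_i-1)$ component is $\big[(\widetilde{v_i}+h)^{n_i}-v_i^{n_i}\big]\big/\big[(\widetilde{v_i}+h)-v_i\big]$: the terms with $l\leq n_i-1$ contribute $v_i^l(\widetilde{v_i}+h)^{n_i-1-l}$, while a count of degrees shows the tail $l\geq n_i$ contributes nothing. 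Therefore $[Z]=\prod_i\big[(\widetilde{v_i}+h)^{n_i}-v_i^{n_i}\big]\big/\big[(\widetilde{v_i}+h)-v_i\big]$, and pushing forward along $\phi$ and pairing with $h^{\dim\kappa}$ — equivalently, integrating over $\pp(V)\times B$, which amounts to coefficient extraction on a product of projective spaces — shows that the degree of the cycle $\phi_*[Z]$ equals the coefficient of $h^{n_1-d}v_1^{d-1}\prod_{i\geq2}v_i^{n_i-1}$ in that product. The same argument with $B=\pp(L)$, $Q=\KK^n/S$ and $c_1(m)=h+(k-1)v_1$ gives Theorem~1.

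The last, and I expect hardest, step is to show that $\phi|_Z$ is birational onto $\kappa_{d,n_1,\ldots,n_k}(L)$, so that $\phi_*[Z]=[\kappa_{d,n_1,\ldots,n_k}(L)]$ and the coefficient above is literally its degree; equivalently, that a general tensor in the Kalman variety has a unique singular $k$-tuple with first component in $L$. Here I would follow \cite{OS}, either by analyzing the differential of $\phi|_Z$ at a general point, or by exhibiting one explicit tensor of $\kappa_{d,n_1,\ldots,n_k}(L)$ admitting a single such tuple at which the incidence fibre is reduced; the degenerate case $d=n_1$ (codimension $0$) should be read off separately.
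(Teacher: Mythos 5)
Your proof is correct and essentially identical to the paper's: the same incidence variety over $\pp(L)\times\prod_{i\geq 2}\pp^{n_i-1}$ (the paper instead works over the full $\prod_i\pp^{n_i-1}$ and caps with $q_1^*c_1(\O(1))^{n_1-d}$, a cosmetic reorganization), the same rank-$\sum_i(n_i-1)$ vector bundle $\bigoplus_i m_i\otimes Q_i$ (the paper's $p^*\O(1)\otimes q^*\varepsilon$ from \cite{FO}), and the same geometric-series computation of its top Chern class. The step you flag as hardest, the birationality of $p$ restricted to the incidence variety, is also left implicit in the paper's proof of Theorem~\ref{thm:degree} -- a legitimate worry, since without it the intersection number computes a multiple of the degree rather than the degree itself.
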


The structure of this paper is as follows. In \S 2 we study Kalman varieties for symmetric tensors. 
In Proposition \ref{prop:symirred} we prove irreducibility and compute the dimension, in Theorem \ref{thm:degsym}
we compute the degree.
In \S 3 we study Kalman varieties for general tensors, in Proposition \ref{prop:dimtensors} we prove irreducibility and compute the dimension, we compute the degree in Theorem \ref{thm:degree}
and we prove a stabilization result when $n_k\gg 0$, off the boundary format, see
Corollary \ref{cor:stab}.

We thank Luca Sodomaco for helpful discussions.

The first author is member of GNSAGA-INDAM. The second author thanks the Department of Mathematics and Computer Science of the University of Florence for support and hospitality. Both authors acknowledge support from the H2020-MSCA-ITN-2018 project POEMA.

\section{Symmetric tensors with eigenvectors in a given subspace}
Let $L\subset \KK^n$ be a linear subspace of dimension $d$.  A symmetric tensor $ T \in Sym^k(\KK^n)$ can be represented by a homogeneous polynomial $ f_T \in \KK[x_1, \ldots, x_n] $ of degree $k$ given by
$$ f_{T}(x_1, \ldots, x_n)= T\cdot x^k:= \Sigma_{i_1,\ldots,i_k=1}^{n} T_{i_1, \ldots, i_k}x_{i_1}x_{i_2} \ldots x_{i_k}.$$

Eigenvectors of matrices was extended to symmetric tensors by Lim\cite{L} and Qi\cite{Q}.
A vector $v \in V$ is an eigenvector of $T$  if there exist $\lambda \in \KK
$ such that
$$ Tv^{k-1}:= [\Sigma_{i_2, \ldots, i_k=1}^{n}T_{i, i_2, \ldots,i_k}v_{i_2}\ldots v_{i_k}]_{i}= \lambda v,$$
this is equivalent to $ \bigtriangledown f_T(v)= k\cdot \lambda v $, i.e. $\bigtriangledown f_T(v)$ and $v$ are dependent. 
This is the definition used in \cite{CS} and \cite{OO}, while in \cite{QZ} $E$-eigenvectors are defined with the additional requirement that are not isotropic.
The general tensor has no isotropic eigentensors \cite[Lemma 4.2]{DLOT}, so that for general tensors the two definitions coincide.
 We are interested in the scheme $ \kappa^s_{d,n,k}(L)$ of all symmetric tensors $f \in Sym^k\KK^n$ that have an eigenvector in $L$.
\begin{Proposition}\label{prop:symirred}
	The variety $\kappa^s_{d,n,k}(L)$ is irreducible,  it has codimension $n-d$ .
	\begin{proof}
	We regard vectors $v \in \KK^n \setminus \{0\}$	as points in the projective space $\pp(\KK^n)$, and we regard a polynomial of degree $k$ as a point in $ \pp(Sym^{k}(\KK^n))$. The product of these two projective spaces,  $ X= \pp(Sym^{k}(\KK^n))  \times \pp(\KK^n)  $, has the two projections
	\begin{equation}	
	\label{eq:twoprojections}	
	\begin{array}{ccccc}&&X\\	
	&\swar{p}&&\sear{q}\\	
	\pp(Sym^{k}(\KK^n))&&&&\pp(\KK^n)	
	\end{array}	
	\end{equation}
Fix the incidence variety $ W= \{ (f, z) \in  X | \text{z is an eigenvector of f}\}$

The projection of $W$ to the second factor 
\begin{align*}
	 q\colon &W \longrightarrow \pp(\KK^n) \\
	& (f, z) \longmapsto z 
\end{align*}
is surjective and every fiber is $ q^{-1}(z)= \{ f \in \pp(Sym^{k}(\KK^n)) | \bigtriangledown f\cdot z=  \lambda \cdot z\}$ and it is the zero scheme of the $ 2 \times 2$ minors of the matrix $[\bigtriangledown f(z)| z] $. Hence it is a linear subspace of codimension $ n-1$ in $\pp(Sym^{k}(\KK^n))$.  

These properties imply that $W$ is irreducible and has codimension $n-1$ in $\pp(Sym^{k}(\KK^n))  \times \pp(\KK^n)$.

The projection of the incidence variety $W$ to the first factor 
\begin{align*}
	  p\colon &W \longrightarrow \pp(Sym^{k}(\KK^n)) \\
	&(f, z) \longmapsto f 
\end{align*}
is surjective and $p^{-1}(f)= \{z \in \pp(\KK^n)| \text{z is an eigenvector of f}\} $. This set is finite for generic $f$ and its number is equal to $  \dfrac{(k-1)^{n}-1}{k-2}$ by \cite{FS} or \cite{CS} (this expression simplifies to $n$ for $k=2$).

We note that $\kappa_{d,n,k}^s(L)= p(W\cap q^{-1}(\pp(L)) ) \subseteq  \pp(Sym^{k}(\KK^n)) .$

The $(d-1)-$dimensional subspace $\pp(L)$ of $\pp(\KK^n)= \pp^{n-1}$ specifies the following diagram:
\begin{equation}	
\label{eq:twoprojections}	
\begin{array}{ccccc}&&W \cap q^{-1}(\pp(L))\\	
&\swar{p}&&\sear{q}\\	
\kappa_{d,n,k}^s(L)&&&&\pp(L)	
\end{array}	
\end{equation}
Each fiber of the map $q$ in above diagram is a linear space of codimension $n-1$ in $\pp(Sym^{k}(\KK^n))$.

This implies that $W \cap q^{-1}(\pp(L)) $ is irreducible and its dimension equals
$$ \dim (\pp(L))+ \dim(\pp(Sym^{k}(\KK^n)))- (n-1)= (d-1)+ \binom{n+k-1}{k}-(n-1)= \binom{n+k-1}{k}- (n-d) .$$
Since the general fiber of the surjection $p$ is finite, the variety $\kappa_{d,n,k}(L)$ is irreducible of the same dimension. Hence $ \kappa_{d,n,k}^s(L)$ has codimension $n-d$ in $\pp^ {\binom{k+n-1}{k}-1}$.
	\end{proof}
\end{Proposition}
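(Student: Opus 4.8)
The plan is to present $\kappa^s_{d,n,k}(L)$ as the image of an incidence variety that fibers nicely over a projective space, and to read off both irreducibility and dimension from that fibration.

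First I would set up $X=\pp(Sym^k(\KK^n))\times\pp(\KK^n)$ with its projections $p,q$ and the incidence variety $W=\{(f,z)\in X\mid \nabla f(z)\textrm{ is proportional to }z\}$. The key observation is that for fixed $z\ne 0$ the eigenvector condition is the vanishing of the $2\times 2$ minors of the $n\times 2$ matrix $[\nabla f(z)\mid z]$, and these are \emph{linear} in the coefficients of $f$; writing them as $z_i\,\partial_j f(z)-z_j\,\partial_i f(z)=0$ and using that some coordinate $z_\ell\ne 0$, one checks they impose exactly $n-1$ independent conditions. Hence every fiber $q^{-1}(z)$ is a linear subspace of $\pp(Sym^k(\KK^n))$ of codimension $n-1$. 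Since $\pp(\KK^n)$ is irreducible and $q$ is surjective with all fibers irreducible of the same dimension, $W$ is irreducible of codimension $n-1$ in $X$.

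Next I would intersect with the $(d-1)$-plane $\pp(L)$ in the second factor. The same description shows that $W\cap q^{-1}(\pp(L))$ fibers over $\pp(L)$ with the same linear fibers, so it is irreducible of dimension $(d-1)+\dim\pp(Sym^k(\KK^n))-(n-1)=\binom{n+k-1}{k}-1-(n-d)$. By construction $\kappa^s_{d,n,k}(L)=p\bigl(W\cap q^{-1}(\pp(L))\bigr)$, so it is irreducible as the continuous image of an irreducible variety; this settles irreducibility. For the codimension it remains to check that $p$ restricted to $W\cap q^{-1}(\pp(L))$ is generically finite onto its image, equivalently that a general tensor possessing an eigenvector in $L$ has only finitely many eigenvectors in $L$. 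Granting this, $\dim\kappa^s_{d,n,k}(L)=\binom{n+k-1}{k}-1-(n-d)$, hence the codimension is $n-d$.

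The point that needs care — and the main obstacle — is this generic finiteness, since \emph{not} every tensor has finitely many eigenvectors. I would deduce it by semicontinuity: the locus $B\subset\pp(Sym^k(\KK^n))$ of tensors with infinitely many eigenvectors is closed and proper, because a general symmetric tensor has exactly $\tfrac{(k-1)^n-1}{k-2}$ eigenvectors by \cite{FS} or \cite{CS} (equal to $n$ when $k=2$). It therefore suffices to exhibit a single $(f_0,z_0)\in W\cap q^{-1}(\pp(L))$ with $f_0\notin B$; after a linear change of coordinates we may assume $e_1\in L$, and then one can take $f_0=x_1^k+\cdots+x_n^k$ when $k\ge 3$ (which has $e_1$ as an eigenvector and finitely many eigenvectors), and a diagonal matrix with pairwise distinct entries when $k=2$. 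Since $W\cap q^{-1}(\pp(L))$ is irreducible and not contained in the proper closed set $p^{-1}(B)$, a general point $(f,z)$ of it has $f\notin B$, so $f$ has finitely many eigenvectors, in particular finitely many in $\pp(L)$. This is exactly the required generic finiteness and completes the dimension count.
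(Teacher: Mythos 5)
Your proof follows the same fibration-over-$\pp(L)$ strategy as the paper: set up the incidence variety $W$, observe that $q$ has linear fibers of codimension $n-1$ to get irreducibility and the dimension of $W\cap q^{-1}(\pp(L))$, then push forward under $p$. The one place where you go beyond the paper's write-up — and rightly so — is the generic finiteness of $p$ restricted to $W\cap q^{-1}(\pp(L))$. The paper simply says ``since the general fiber of the surjection $p$ is finite,'' but that statement concerns $p$ on all of $W$, and one must rule out that $W\cap q^{-1}(\pp(L))$ lies entirely over the locus $B$ of tensors with infinitely many eigenvectors. Your semicontinuity argument (exhibit a concrete $(f_0,z_0)$ with $f_0\notin B$ and $z_0\in\pp(L)$, e.g. $f_0=x_1^k+\cdots+x_n^k$ after normalizing $e_1\in L$, or a diagonal matrix with distinct entries when $k=2$) closes that gap cleanly and is correct. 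You also state the intermediate dimension as $\binom{n+k-1}{k}-1-(n-d)$, which is the accurate value (the paper drops a $-1$ in the displayed formula, though its final codimension conclusion is right). In short: same approach, but a more careful justification of the key generic-finiteness step.
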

In order to compute the degree of $\kappa_{d,n,k}^s(L)$ we construct a vector bundle $E$ on $X$ with a section
vanishing on $W$.
We briefly recall the construction in \cite[\S 3.1]{OO}.

Let $\O(-1)$ be the universal bundle of rank $1$ and $Q$ the quotient bundle of rank $n-1$. They appear in the following exact sequence, with $\O$ the structure sheaf of $\pp^{n-1}$:
\begin{equation}\label{eq:taut}
	0\lar \O(-1)\lar \O \otimes \KK^n\lar Q\lar 0. \quad \quad 
\end{equation}
Tensoring by $\O(k-1)$ we get
\begin{equation*}\label{eq:taut}
	0\lar \O(k-2)\lar \O(k-1) \otimes \KK^n\lar Q(k-1)\lar 0. \quad \quad 
\end{equation*}
The fiber of $Q(k-1)$ at $v \in \KK^n$,
is isomorphic to $ Hom(\langle v^{k-1}\rangle , \dfrac{\KK^n}{\langle v\rangle })$.

Every $f \in Hom({\mathrm Sym}^{k-1}\KK^n,\KK^n)$ induces a section $s_f \in H^0(Q(k-1))$ which corresponds to the composition $\langle v^{k-1}\rangle  \stackrel{i} \lar  {\mathrm Sym}^{k-1}\KK^n \stackrel{f} \lar \KK^n  \stackrel{\pi} \lar  \dfrac{\KK^n}{\langle v\rangle }$ on the fiber of $\langle v\rangle $. The section $s_f$ vanishes on $\langle v\rangle$ if and only if $\pi(f(v^{k-1}))=0$.
In particular, every symmetric tensor $f\in{\mathrm Sym}^k\KK^n$ defines (by contraction) an element
in $Hom({\mathrm Sym}^{k-1}\KK^n,\KK^n)$ and hence a section of $Q(k-1)$ which by abuse of notation
we denote again with $s_f$. This implies 

\begin{Lemma}{\cite[Lemma 3.7 (2)]{OO}}
	For $f \in {\mathrm Sym}^{k}\KK^n$, the section $s_f$ vanishes in $v$ iff $v$ is a eigenvector of $f$. 
\end{Lemma}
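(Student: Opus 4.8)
The plan is simply to unwind the construction of $s_f$ recalled just above the statement. Twisting the tautological sequence (\ref{eq:taut}) by $\O(k-1)$ identifies the fiber of $Q(k-1)$ over a point $[v]\in\pp^{n-1}$ with $\hom(\langle v^{k-1}\rangle,\KK^n/\langle v\rangle)$, and under this identification $s_f([v])$ is, by definition, the linear map sending the generator $v^{k-1}$ to $\pi\bigl(f(v^{k-1})\bigr)$, where $f(v^{k-1})=Tv^{k-1}\in\KK^n$ is the contraction of $f$ against $v^{k-1}$ and $\pi\colon\KK^n\rar\KK^n/\langle v\rangle$ is the canonical projection.

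First I would note that a homomorphism out of the one-dimensional space $\langle v^{k-1}\rangle$ vanishes exactly when it kills its generator; hence $s_f([v])=0$ if and only if $\pi(Tv^{k-1})=0$, that is, if and only if $Tv^{k-1}\in\langle v\rangle$. Next I would invoke the definition of eigenvector recalled at the beginning of this section: $v$ is an eigenvector of $f$ precisely when $Tv^{k-1}=\lambda v$ for some $\lambda\in\KK$ (equivalently, by Euler's identity $\bigtriangledown f_T(v)=kTv^{k-1}$, when $\bigtriangledown f_T(v)$ is a multiple of $v$). Since the condition $Tv^{k-1}=\lambda v$ for some $\lambda$ is exactly $Tv^{k-1}\in\langle v\rangle$ — the degenerate case $Tv^{k-1}=0$ being covered by $\lambda=0$, and $0\in\langle v\rangle$ — the two conditions coincide, which gives both implications at once.

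There is no genuine obstacle here: the content of the lemma is entirely the bookkeeping in the first paragraph, namely that the trivialization of the fiber of $Q(k-1)$ used to define $s_f$ is the one making $s_f([v])$ equal to the class of $Tv^{k-1}$ modulo $\langle v\rangle$. The only point that deserves a line of care is the compatibility of the twist by $\O(k-1)$ with the contraction $\mathrm{Sym}^{k-1}\KK^n\rar\KK^n$ induced by $f$, but this is built into the construction of $s_f$ recalled before the statement and is purely formal.
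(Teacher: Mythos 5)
Your proposal is correct and follows exactly the same route the paper (and the cited reference \cite{OO}) takes: it unwinds the identification of the fiber of $Q(k-1)$ with $\hom(\langle v^{k-1}\rangle,\KK^n/\langle v\rangle)$, observes that $s_f([v])=0$ iff $\pi(Tv^{k-1})=0$ iff $Tv^{k-1}\in\langle v\rangle$, and matches this against the definition of eigenvector. There is nothing to add; the lemma is indeed just the bookkeeping you describe.
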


We recall that if a vector bundle $E$ of rank $r$ on $X$ has a section vanishing on $Z$, and the codimension of $Z$ is equal to $r$, then the class of $[Z]$ in the degree $r$ component of the Chow ring $A^*(X)=\KK[h,v]/(h^{{{n+k-1}\choose k}},v^n)$ is computed by $[Z]=c_r(E).$ We shall apply this to the following vector bundle on the product variety $X$
$$ E:= p^*\O_{\pp(Sym^{k}(\KK^n))}(1) \otimes q^* Q(k-1).$$ 
Since $H^{0}(\O(1))= Sym^{k}(\KK^n)$ and $H^{0}(Q(k-1))= \Gamma^{k, 1^{n-2}}\KK^n$, by K\"unneth formula we get
$$ H^{0}(E)= Hom(Sym^{k}(\KK^n),\Gamma^{k, 1^{n-2}}\KK^n). $$
 We have a section ${\textit I}\in H^0(E)$ given by the map $M\mapsto s_M$.
The section $\textit{I}$ vanishes exactly at the pairs $(M, z)$ such that  $z$ is eigenvector of $M$,
so we get that the zero locus $Z(I)$ of  ${\textit I}\in H^0(E)$ equals the incidence variety $W$.

  \begin{Theorem}\label{thm:degsym}
  	The degree of  $\kappa^s_{d,n,k}(L)$  equals $\Sigma_{i=0}^{d-1}\binom{n-d+i}{i}(k-1)^i$ in $\pp^ {\binom{n+k-1}{k}-1}$.
  \end{Theorem}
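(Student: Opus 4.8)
The plan is to cut the incidence variety $W$ down by the linear condition $z\in L$, identify the resulting class with a Chern class on a product of two projective spaces, and push it forward to $\pp(Sym^{k}(\KK^n))$.

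Put $N=\binom{n+k-1}{k}$ and let $Y=\pp(Sym^{k}(\KK^n))\times\pp(L)\subset X$, so that $A^{*}(Y)=\KK[h,v]/(h^{N},v^{d})$, where $h$ pulls back the hyperplane class of $\pp(Sym^{k}(\KK^n))$ and $v$ pulls back the hyperplane class of $\pp(\KK^n)$. Restricting the section $\textit{I}$ of $E=p^{*}\O(1)\otimes q^{*}Q(k-1)$ to $Y$, its zero scheme is $Z(\textit{I})\cap Y=W\cap q^{-1}(\pp(L))$, which by Proposition \ref{prop:symirred} is irreducible of codimension $n-1=\mathrm{rank}(E|_{Y})$ in $Y$. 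Hence, by the principle recalled just before the statement, $\big[\,W\cap q^{-1}(\pp(L))\,\big]=c_{n-1}(E|_{Y})$ in $A^{*}(Y)$.

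The next step is the Chern class computation. Twisting the tautological sequence for $\pp^{n-1}$ by $\O(k-1)$ and using the splitting principle, $c(q^{*}Q(k-1))=\sum_{i\ge 0}\gamma_{i}v^{i}$ with $\gamma_{i}=\sum_{j=0}^{i}\binom{n-1-j}{\,i-j\,}(k-1)^{\,i-j}$; since $E$ is the twist of a rank $n-1$ bundle by a line bundle of first Chern class $h$, the splitting principle also gives $c_{n-1}(E|_{Y})=\sum_{i=0}^{n-1}\gamma_{i}\,h^{\,n-1-i}v^{i}$, and modulo $v^{d}$ only the summands with $i\le d-1$ remain. Applying $p_{*}$, which integrates over the fibre $\pp(L)=\pp^{d-1}$ and so retains the coefficient of $v^{\,d-1}$, isolates the term $i=d-1$ and yields $p_{*}\big[\,W\cap q^{-1}(\pp(L))\,\big]=\gamma_{d-1}\,h^{\,n-d}$; reindexing the sum, $\gamma_{d-1}=\sum_{i=0}^{d-1}\binom{n-d+i}{i}(k-1)^{i}$, which is the asserted number.

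Finally, to conclude that $p_{*}\big[\,W\cap q^{-1}(\pp(L))\,\big]=\deg\!\big(\kappa^{s}_{d,n,k}(L)\big)\,h^{\,n-d}$ one must check that $p\colon W\cap q^{-1}(\pp(L))\to\kappa^{s}_{d,n,k}(L)$ is birational, i.e. that a general symmetric tensor having an eigenvector in $L$ has exactly one such eigenvector (we may assume $d<n$, the case $d=n$ giving $\kappa^{s}_{d,n,k}(L)=\pp^{N-1}$). This I would obtain from a dimension count: the locus $\{(f,z_{1},z_{2})\mid z_{1}\ne z_{2}\text{ in }\pp(L),\ z_{1}\text{ and }z_{2}\text{ eigenvectors of }f\}$ fibres over an open subset of $\pp(L)\times\pp(L)$ with fibres linear of codimension $2(n-1)$, hence has dimension $2(d-1)+(N-1)-2(n-1)=\dim\kappa^{s}_{d,n,k}(L)-(n-d)$, strictly less than $\dim\kappa^{s}_{d,n,k}(L)$; since its image lies in the irreducible variety $\kappa^{s}_{d,n,k}(L)$ it is a proper closed subset, so the general fibre of $p$ is a single point. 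The Chern class bookkeeping is routine; the point requiring care is precisely this last genericity argument --- together with the implicit claim that the zero scheme of $\textit{I}|_{Y}$ is generically reduced --- and it is handled exactly as the corresponding statement for matrices in \cite{OS}.
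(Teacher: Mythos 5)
Your proof is correct and takes essentially the same approach as the paper: both identify $W$ (resp.\ $W\cap q^{-1}(\pp(L))$) with the zero locus of a section of $E=p^*\O(1)\otimes q^*Q(k-1)$, compute the top Chern class via the tensor-by-line-bundle formula, and extract the coefficient of $h^{n-d}v^{d-1}$. The point you add --- and the paper leaves tacit --- is the verification that $p\colon W\cap q^{-1}(\pp(L))\to\kappa^{s}_{d,n,k}(L)$ is generically one-to-one (via the dimension count on the double-eigenvector locus) and that the zero scheme is reduced; this is indeed what is needed for the Chern number to compute the degree of the image, and your argument for it is sound.
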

\begin{proof}
 Since $Z(\textit{I})$ has codimension $rkE= n-1$ in $X$, the class of $Z(\textit{I})$ equals the top chern class of $E$(By theorem 4.4). In symbols, $[Z(\textit{I})]=c_{n-1}(E).$  
	
	The desired degree equals
	\begin{equation}\label{eq:**} \deg \kappa^s_{d,n,k}(L)= p^*c_1(\O(1))^{\binom{n-k-1}{k}-(n-d)-1}\cdot c_{(n-1)}(E)\cdot q^*(c_1(\O(1)))^{n-d}. \end{equation}
	The Chern class of $E= p^*\O_{\pp(Sym^{k}(\KK^n))}(1) \otimes q^* Q(k-1)$ decomposes as 
	$$ c_{(n-1)}(E)= \Sigma_{i=0}^{n-1}p^*c_1(\O(1))^{(n-1)-i}.q^*c_i(Q(k-1)).$$
	Hence the equality on the right of (\ref{eq:**}) can be written as
	\begin{eqnarray}
	\nonumber \deg \kappa_{d,n,k}^s(L)&= & p^*c_1(\O(1))^{\binom{n-k-1}{k}-(n-d)-1}\cdot (\Sigma_{i=0}^{n-1}p^*c_1(\O(1))^{(n-1)-i}\cdot q^*c_i(Q(k-1))\cdot q^*c_1(\O(1))^{n-d})\\
	\nonumber & = & \Sigma_{i=0}^{n-1}p^*c_1(\O(1))^{\binom{n-k-1}{k}-1+(d-1)-i} \cdot q^*[c_i(Q(k-1))\cdot c_1(\O(1))^{n-d}]. 
	\end{eqnarray}
All summands are zero except for $i= d-1$, and we remain with $\deg c_{d-1}(Q(k-1))$.
Since $\deg c_i(Q)=1$, ${\mathrm rk}(Q)=n-1$, the result follows from the  formula
$$c_j(E\otimes L)=\sum_{i=0}^{j}{{{\mathrm rk}E-j+i}\choose{i}}c_i(E)c_1(L)^{i}$$
see \cite[\S 1.2]{OSS}.	
 \end{proof}
\def\niente{
{\red \begin{Theorem} The degree of Kalman variety of symmetric tensor having eigenvector $v\in L$ is the coefficient of $h^{n-d}v^{d-1}$ in the expansion of
		$$\frac{[1+(k-1)v+h]^n}{1+(k-2)v+h}$$
better of 
$$\frac{[(k-1)v+h]^n}{(k-2)v+h}$$
	\end{Theorem}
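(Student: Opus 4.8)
The plan is to recognise the degree computed in Theorem~\ref{thm:degsym} as the top Chern class of the very bundle $E$ used there, and to repackage $c(E)$ by means of the twisted tautological sequence, which produces generating functions rather than the binomial sum. Recall from the proof of Theorem~\ref{thm:degsym} — via formula (\ref{eq:**}) and the identity $[Z(I)]=c_{n-1}(E)$ — that the degree of $\kappa^{s}_{d,n,k}(L)$ is the coefficient of $h^{n-d}v^{d-1}$ in $c_{n-1}(E)$, where $E=p^{*}\O(1)\otimes q^{*}Q(k-1)$, $h=p^{*}c_1(\O(1))$, $v=q^{*}c_1(\O(1))$, the computation taking place in $A^{*}(X)=\KK[h,v]/(h^{\binom{n+k-1}{k}},v^{n})$; equivalently, on $\pp^{n-1}$ the degree is $\deg c_{d-1}(Q(k-1))$.

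First I would compute the total Chern class directly. Twisting the tautological sequence $0\to\O(-1)\to\O\otimes\KK^{n}\to Q\to 0$ by $\O(k-1)$ gives $0\to\O(k-2)\to\O(k-1)\otimes\KK^{n}\to Q(k-1)\to 0$, so by the Whitney formula $c(Q(k-1))=\frac{(1+(k-1)v)^{n}}{1+(k-2)v}$ in $\KK[v]/(v^{n})$. Pulling this sequence back along $q$ and tensoring with the line bundle $p^{*}\O(1)$, which preserves exactness, gives
$$0\to p^{*}\O(1)\otimes q^{*}\O(k-2)\to\bigl(p^{*}\O(1)\otimes q^{*}\O(k-1)\bigr)^{\oplus n}\to E\to 0,$$
whence $c(E)=\dfrac{(1+(k-1)v+h)^{n}}{1+(k-2)v+h}$ in $A^{*}(X)$. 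Reading off the coefficient of $h^{n-d}v^{d-1}$ — legitimate because this monomial lies in degree $n-1$ (the rank of $E$) and is not killed by the relations of $A^{*}(X)$ when $2\le d\le n$ — yields at once the first displayed formula of the statement.

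It remains to pass to the homogeneous (``better'') form $\dfrac{[(k-1)v+h]^{n}}{(k-2)v+h}$. Introduce an auxiliary variable $t$ and set $F(t,v,h)=\dfrac{(t+(k-1)v+h)^{n}}{t+(k-2)v+h}$, a rational function homogeneous of degree $n-1$ and symmetric in $t$ and $h$. Expanding $F$ in powers of $v,h$ over $\KK[t,t^{-1}]$ — legitimate since the denominator has unit ``constant term'' $t$ — homogeneity forces the coefficient of $v^{d-1}h^{n-d}$ to carry the factor $t^{0}$; hence it is unchanged on setting $t=1$ (the first form) or $t=0$ (the ``better'' form), and the latter shows in particular that the coefficient of $h^{n-d}v^{d-1}$ in that non-polynomial rational function is unambiguous, independent of the direction of expansion. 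The symmetry $t\leftrightarrow h$ likewise identifies both with the one-variable value $\deg c_{d-1}(Q(k-1))$ computed above. Finally, expanding the denominator of the ``better'' form geometrically gives this coefficient as $\sum_{j=0}^{d-1}\binom{n}{j}(k-1)^{j}(2-k)^{d-1-j}$; writing $x=k-1$ and comparing terms of degree $\le d-1$ in the identity $\sum_{j\ge 0}\binom{n}{j}x^{j}(1-x)^{d-1-j}=(1-x)^{-(n-d+1)}=\sum_{i\ge 0}\binom{n-d+i}{i}x^{i}$ recovers the closed form $\sum_{i=0}^{d-1}\binom{n-d+i}{i}(k-1)^{i}$ of Theorem~\ref{thm:degsym}, confirming consistency.

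The only genuine difficulty is bookkeeping: keeping straight which variable is the hyperplane class on which projective factor, and giving meaning to the coefficient extraction for a rational function that is not a polynomial. The homogeneity argument of the previous paragraph disposes of the latter, since a monomial of top degree $n-1$ is read off from $F$ unambiguously.
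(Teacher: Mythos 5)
Your computation of $c(E)=\dfrac{(1+(k-1)v+h)^n}{1+(k-2)v+h}$ from the twisted tautological (Euler) sequence, and the reduction of the degree to the coefficient of $h^{n-d}v^{d-1}$ in $c_{n-1}(E)$, is exactly the paper's argument; that part is correct and matches.

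The one soft spot is the passage from the inhomogeneous form to the ``better'' homogeneous form via the auxiliary variable $t$. You correctly observe that, in the $(v,h)$-power-series expansion of $F(t,v,h)$ over $\KK[t,t^{-1}]$, the coefficient of $v^{d-1}h^{n-d}$ is a constant $c\,t^0$. But you then assert that this constant coincides with ``the'' coefficient of $v^{d-1}h^{n-d}$ in $F(0,v,h)=\frac{((k-1)v+h)^n}{(k-2)v+h}$. That is a separate claim: $F(0,v,h)$ is a nonpolynomial rational function whose coefficient extraction requires choosing an expansion direction, and ``setting $t=0$'' in a Laurent series in $t$ does not automatically commute with re-expanding the resulting rational function. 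The clean way to close this, which the paper itself uses in its proof of Theorem~\ref{thm:degree} for general tensors, is the telescoping identity
$$\frac{(t+(k-1)v+h)^n-v^n}{t+(k-2)v+h}=\sum_{j=0}^{n-1}(t+(k-1)v+h)^{\,n-1-j}v^j,$$
which exhibits $F(t,v,h)$ as a genuine degree-$(n-1)$ polynomial $P(t,v,h)$ plus the remainder $\dfrac{v^n}{t+(k-2)v+h}$, whose every expansion has $v$-order $\ge n>d-1$ and hence cannot contribute to the sought coefficient. One then reads the coefficient off $P$, and specialization $t=0$ or $t=1$ is harmless because $P$ is a polynomial and the relevant coefficient has $t$-degree $0$. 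This also explains why the ``better'' form really should carry the $-v^n$ in the numerator, as in Theorem~\ref{thm:degree}. Your closing binomial-identity cross-check against Theorem~\ref{thm:degsym} does independently confirm the final answer, so the overall conclusion is sound; but the $t=0$ step as written is not self-justifying and should be replaced or supplemented by the telescoping argument above.
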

	\begin{proof}
		The degree is equal to $p^*c_1(\O(1))^{\binom{n+k-1}{k}-1-(n-d)}c_{n-1}(E)q^*(c_1(\O(1)))^{n-d}$.
		Denote $h=c_1(\O_{\pp(sym^{k}(\KK^n))})$, $v=c_1(\O_{\pp(\KK^n)})$.
		By the Euler sequence $E$ has the Chern polynomial
		$\frac{[1+(k-1)v+h]^n}{1+(k-2)v+h}$.
		So the degree is the coefficient of $h^{\binom{n+k-1}{k}-1}v^{n-1}$ in the expansion of
		$$h^{\binom{n+k-1}{k}-1-(n-d)}\frac{[1+(k-1)v+h]^n}{1+(k-2)v+h}v^{n-d}.$$
		The thesis follows.
\end{proof}}
}
\begin{Remark}
The result generalizes to any complex vector space $V$ equipped with a symmetric nondegenerate bilinear form. 
The construction works in the setting of $SO(V)$-actions, in particular the space of sections we have considered,
like $H^0(Q(k-1))$, are $SO(V)$-modules. Note that in this setting $V$ is isomorphic to its dual $V^\vee$.
\end{Remark}

\section{Singular vector $k$-ples of tensors}
Let $L\subset\KK^{n_1}$ be a fixed $d-$dimensional linear subspace .
{ \begin{Definition}
	The $k-$ple $(v_1, \ldots , v_k)$ with $v_i\in \KK^{n_i}\setminus\{ 0\}$ is called a singular $k$-ple for a tensor $T\in \KK^{n_1}\otimes\ldots\otimes \KK^{n_k}$ if 
$$ T(v_1, \ldots, \widehat{v_i}, \ldots, v_k)= \lambda_i v_i,  \textrm{for some\ }\lambda_i\in\KK, \quad i= 1, \ldots, k. $$

\end{Definition}}

 We are interested in the Kalman variety $ \kappa_{d,n_1,\ldots, n_k}(L)$ of all  tensors

\noindent $T \in \KK^{n_1}\otimes\ldots\otimes \KK^{n_k}$ that have 
 a singular $k-$tuple $ (v_1, \ldots, v_k)$ with $v_1\in L$.

\begin{Proposition}\label{prop:dimtensors}
	The variety $\kappa_{d,n_1,\ldots, n_k}(L)$ is irreducible and it has codimension $n_{1}- d$.
\end{Proposition}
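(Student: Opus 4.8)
The plan is to mimic the incidence-variety argument used in Proposition~\ref{prop:symirred}, now working on the product
$$X=\pp(\KK^{n_1}\otimes\cdots\otimes\KK^{n_k})\times\pp(\KK^{n_1})\times\cdots\times\pp(\KK^{n_k})$$
with the projection $p$ to the first factor and the projections $q_i$ to the $i$-th projective space of vectors. First I would introduce the incidence variety
$$W=\{(T,[v_1],\ldots,[v_k])\in X \mid (v_1,\ldots,v_k)\textrm{ is a singular }k\textrm{-tuple of }T\},$$
together with its closed subvariety $W\cap q_1^{-1}(\pp(L))$, and observe that $\kappa_{d,n_1,\ldots,n_k}(L)=p\big(W\cap q_1^{-1}(\pp(L))\big)$.

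The key step is to show $W$ is irreducible by projecting it onto $\pp(\KK^{n_1})\times\cdots\times\pp(\KK^{n_k})$ via $q=(q_1,\ldots,q_k)$: this map is surjective, and the fiber over a fixed $([v_1],\ldots,[v_k])$ is the set of tensors $T$ satisfying the $k$ linear-rank conditions $T(v_1,\ldots,\widehat{v_i},\ldots,v_k)\parallel v_i$ for each $i$. Each such condition is the vanishing of the $2\times 2$ minors of $[T(v_1,\ldots,\widehat{v_i},\ldots,v_k)\mid v_i]$, hence cuts out a linear subspace of codimension $n_i-1$ in $\pp(\KK^{n_1}\otimes\cdots\otimes\KK^{n_k})$; since the $k$ conditions involve disjoint sets of coordinate functionals on $T$ (the condition for index $i$ only constrains the image of the contraction in the direction perpendicular to $v_i$, which can be arranged to be independent as $i$ varies), the fiber is a linear subspace of codimension exactly $\sum_{i=1}^k(n_i-1)$. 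A projective bundle with irreducible, equidimensional fibers over an irreducible base is irreducible, so $W$ is irreducible of codimension $\sum_i(n_i-1)$ in $X$. Restricting the base of this bundle to $\pp(L)\times\prod_{i\ge 2}\pp(\KK^{n_i})$, which is irreducible of dimension $(d-1)+\sum_{i\ge 2}(n_i-1)$, gives that $W\cap q_1^{-1}(\pp(L))$ is irreducible of dimension
$$(d-1)+\sum_{i\ge 2}(n_i-1)+\dim\pp(\KK^{n_1}\otimes\cdots\otimes\KK^{n_k})-\sum_{i=1}^k(n_i-1)=\dim\pp(\KK^{n_1}\otimes\cdots\otimes\KK^{n_k})-(n_1-d).$$

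Finally I would descend along $p$: its restriction to $W\cap q_1^{-1}(\pp(L))$ is dominant onto $\kappa_{d,n_1,\ldots,n_k}(L)$, so the image is irreducible. To conclude the codimension is exactly $n_1-d$ I need the generic fiber of this restriction to be finite (equivalently, zero-dimensional); this follows because for a generic tensor $T$ the set of singular $k$-tuples is finite by the Friedland--Ottaviani count in \cite{FO}, and the same genericity persists for a generic $T\in\kappa_{d,n_1,\ldots,n_k}(L)$ by a standard upper-semicontinuity argument on fiber dimension, so no component of the fiber can be positive-dimensional. Hence $\dim\kappa_{d,n_1,\ldots,n_k}(L)=\dim\big(W\cap q_1^{-1}(\pp(L))\big)$ and the codimension equals $n_1-d$.

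The main obstacle is the fiber-dimension computation of $q$ over a point of $\prod_i\pp(\KK^{n_i})$: one must verify that the $k$ parallelism conditions on $T$ are genuinely independent, i.e.\ that imposing $T(v_1,\ldots,\widehat{v_i},\ldots,v_k)\parallel v_i$ for all $i$ cuts down the space of tensors by the full $\sum_i(n_i-1)$ and not less. Intuitively each condition fixes the component of one contraction of $T$ in an $(n_i-1)$-dimensional complement of $\langle v_i\rangle$, and these components live in "different directions" of the tensor space, but making this precise (and checking it is not just an upper bound on codimension, which would only give one inequality) requires either an explicit coordinate description at a convenient point $([e_1],\ldots,[e_1])$ or invoking the bundle-section picture as in \S2: the fiber is the zero locus of a section of $\bigoplus_i q_i^* Q_i$ on $\pp(\KK^{n_1}\otimes\cdots\otimes\KK^{n_k})$ twisted appropriately, and a transversality check gives the expected codimension. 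Once this is in hand the rest is a routine dimension count.
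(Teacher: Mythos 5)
Your proposal follows essentially the same incidence-variety argument as the paper: project $W\cap q_1^{-1}(\pp(L))$ to the space of vector tuples to get irreducibility and the dimension count, then descend via $p$ using finiteness of singular $k$-tuples for generic $T$ from \cite{FO}. The only difference is that you flag (and sketch how to close) two small gaps the paper passes over silently—that the $k$ parallelism conditions cut the fiber of $q$ down by exactly $\sum_i(n_i-1)$, which is seen immediately in coordinates at $([e_1],\ldots,[e_1])$, and that the finiteness of the generic $p$-fiber persists after restricting to $W\cap q_1^{-1}(\pp(L))$—so your write-up is, if anything, slightly more careful than the original.
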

	\begin{proof}
		The product   $ X= \pp(\KK^{n_1}\otimes\ldots\otimes \KK^{n_k})  \times \pp(\KK^{n_1})\times \ldots\times \pp(\KK^{n_k})  $, has the two projections
		\begin{equation}	
		\label{eq:twoprojections}	
		\begin{array}{ccccc}&&X\\	
		&\swar{p}&&\sear{q}\\	
		\pp(\KK^{n_1} \otimes \ldots \otimes \KK^{n_k})&&&&\pp(\KK^{n_1})\times  \ldots \times\pp(\KK^{n_k})	
		\end{array}	
		\end{equation}
		Fix the variety (we use the same notation of previous section to ease the analogy) $$ {W}= \{ (T, v_1,\ldots, v_k) \in  X | \text{$( v_1,\ldots, v_k)$ is singular k-tuple for T}\}\subseteq X.$$
		
		The projection of $W$ to the second factor 
		\begin{center}
			$  q: W \longrightarrow \pp(\KK^{n_1})\times  \ldots\times \pp(\KK^{n_k}) $\\
			$ (T, v_1,\ldots, v_k) \longmapsto ( v_1,\ldots, v_k) $
		\end{center}
		
	{ 	is surjective and every fiber is $$ q^{-1}( v_1,\ldots, v_k)= \{ T \in \pp(\KK^{n_1})  ) |  T(v_1, \ldots , \widehat{v_i}  , \ldots, v_k)= \lambda_i v_i\}$$  and this is the zero scheme of 
the ideal of $ 2 \times 2$ minors of the $n_i \times 2$ matrix $( T(v_1, \ldots , \widehat{v_i} , \ldots, v_k)|v_i) $. Hence it is a linear subspace of codimension $\Sigma_{i=1}^{k} (n_{i}-1)= \Sigma_{i=1}^{k}n_i-k $ in $\pp(\KK^{n_1}\otimes\ldots\otimes \KK^{n_k})$.}
		
		The projection of the variety $W$ to the first factor 
		\begin{center}
			$  p: W \longrightarrow \pp(\KK^{n_1}\otimes\ldots\otimes \KK^{n_k}) $\\
			$ (T,v_1,\ldots, v_k) \longmapsto T $
		\end{center}
		is surjective and $p^{-1}(T)= \{(v_1,\ldots, v_k) \in \pp(\KK^{n_1})\times  \ldots \times\pp(\KK^{n_k})| \text{$( v_1,\ldots, v_k)$ is singular $k$-ple for T}\} $.  This set is finite for generic $T$.
		
		Consider the map 
		\begin{center}
			$  q_{1}:   \pp(\KK^{n_1})\times  \ldots \times\pp(\KK^{n_k}) \longrightarrow \pp(\KK^{n_1})$
		\end{center}
		
		The Kalman variety has the following description in terms of the above diagram:
		$$  \kappa_{d,n_1,\ldots, n_k}(L)= p(W \cap q^{-1}(q_1^{-1}(\pp(L))))$$
		where $q_{1}^{-1}(\pp(L))=\pp(L)\times \pp(\KK^{n_2}) \times  \ldots \times\pp(\KK^{n_k}).$
		The $(d-1)-$dimensional subspace $\pp(L)$ of $\pp(\KK^{n_1})= \pp^{n_{1}-1}$ specifies the following diagram:
		\begin{equation}	
		\label{eq:twoprojections}	
		\begin{array}{ccccc}&&W \cap q^{-1}(q_{1}^{-1}(\pp(L)))\\	
		&\swar{p}&&\sear{q}\\	
		\kappa_{d,n_1,\ldots, n_k}(L) &&&&\pp(L)\times \pp(\KK^{n_2}) \times  \ldots \pp(\KK^{n_k})	
		\end{array}	
		\end{equation}
		Each fiber of the map $q$ in above diagram is a linear space of codimension $\Sigma_{i=1}^{k} (n_{i}-1)$ in $\pp((\KK^{n_1}\otimes\ldots\otimes \KK^{n_k})$, and we conclude exactly as in the proof of Proposition \ref{prop:symirred}.
		
	\end{proof}
In order to compute the degree of $\kappa_{d,n_1,\ldots, n_k}(L) $ we construct a vector bundle $E$ on $X$ with a section
vanishing exactly on $W$.
We briefly recall the construction in \cite[\S 3]{FO}.

The fiber of $q_{i}^{*}Q_{\pp(\KK^{n_i})}(1,\ldots 1,\underbrace{0}_{i},1\ldots, 1)$ at $x=[v_1\otimes\ldots\otimes v_k]$, is isomorphic to $Hom(\langle v_1\otimes\ldots\otimes\widehat{v_i}\otimes\ldots\otimes v_k\rangle,\dfrac{\KK^{n_i}}{[v_i]})$. Every tensor 
$T\in \KK^{n_1}\otimes\ldots\otimes \KK^{n_k}\simeq{\mathrm Hom}(\KK^{n_1}\otimes\ldots\otimes\widehat{\KK^{n_i}}\otimes\ldots \KK^{n_k},\KK^{n_i})$ induces a section $s_{T} $  of  $q_{i}^{*}Q_{\pp(\KK^{n_i})}(1,\ldots 1,\underbrace{0}_{i},1\ldots, 1)$
which corresponds to the composition $$\langle v_1\otimes\ldots\otimes\widehat{v_i}\otimes\ldots\otimes v_k\rangle  \stackrel{i} \lar  \KK^{n_1}\otimes\ldots\otimes\widehat{\KK^{n_i}}\otimes\ldots \otimes\KK^{n_k} \stackrel{T} \lar \KK^{n_i} \stackrel{\pi} \lar  \dfrac{\KK^{n_i}}{\langle v_i\rangle}$$ on the fiber of $\langle v_1\otimes\ldots\otimes v_k\rangle$. The section $s_T$ vanishes in  $\langle v_1\otimes\ldots\otimes v_k\rangle$ if and only if  $T(v_1, \ldots, \widehat{v_i}, \ldots, v_k)= \lambda_i v_i,$
for some $\lambda_i\in\KK$. 

 Define the vector bundle 
$\varepsilon= \sum_{i=1}^k q_{i}^{*}Q_{\pp(\KK^{n_i})}(1,\ldots 1,\underbrace{0}_{i},1\ldots, 1)$.

 This implies 

\begin{Lemma}{\cite[Lemma 11)]{FO}}
	For $T \in  \KK^{n_1}\otimes\ldots\otimes \KK^{n_k}$, the diagonal section $(s_T,\ldots, s_T)\in H^0(\varepsilon)$ vanishes in $v_1\otimes\ldots\otimes v_k$ iff $(v_1,\ldots, v_k)$ is a singular $k$-ple of $T$. 
\end{Lemma}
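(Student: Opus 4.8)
The plan is to reduce the statement to the $k$ separate vanishing conditions already recorded in the construction above. First I would exploit that $\varepsilon=\sum_{i=1}^k q_i^*Q_{\pp(\KK^{n_i})}(1,\ldots,1,\underbrace{0}_i,1,\ldots,1)$ is a direct sum of vector bundles: a global section of a direct sum vanishes at a point $x\in X$ if and only if each of its $k$ summand-components vanishes at $x$. Hence the diagonal section $(s_T,\ldots,s_T)$ vanishes at $x=[v_1\otimes\ldots\otimes v_k]$ if and only if, for every $i=1,\ldots,k$, the $i$-th component $s_T\in H^0\big(q_i^*Q_{\pp(\KK^{n_i})}(1,\ldots,1,\underbrace{0}_i,1,\ldots,1)\big)$ vanishes at $x$.

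Next, for a fixed $i$, I would unwind what vanishing of that component at $x$ means, using the fiber identification recalled just above: the fiber of $q_i^*Q_{\pp(\KK^{n_i})}(1,\ldots,1,\underbrace{0}_i,1,\ldots,1)$ at $x$ is $\hom(\langle v_1\otimes\ldots\otimes\widehat{v_i}\otimes\ldots\otimes v_k\rangle,\KK^{n_i}/\langle v_i\rangle)$, and there the value of $s_T$ is the composition $\pi\circ T\circ\iota$, with $\iota$ the inclusion of the line spanned by $v_1\otimes\ldots\otimes\widehat{v_i}\otimes\ldots\otimes v_k$ and $\pi\colon\KK^{n_i}\to\KK^{n_i}/\langle v_i\rangle$ the projection. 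This composition is the zero map exactly when $\pi\big(T(v_1,\ldots,\widehat{v_i},\ldots,v_k)\big)=0$, i.e. when $T(v_1,\ldots,\widehat{v_i},\ldots,v_k)\in\langle v_i\rangle$, i.e. when $T(v_1,\ldots,\widehat{v_i},\ldots,v_k)=\lambda_i v_i$ for some $\lambda_i\in\KK$.

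Finally I would conjoin these $k$ conditions: the diagonal section vanishes at $x=[v_1\otimes\ldots\otimes v_k]$ if and only if $T(v_1,\ldots,\widehat{v_i},\ldots,v_k)=\lambda_i v_i$ for suitable $\lambda_i\in\KK$ and all $i=1,\ldots,k$, which is precisely the defining condition for $(v_1,\ldots,v_k)$ to be a singular $k$-ple of $T$. The statement is thus a direct unwinding of the construction, and the only point that needs a little care — the only place an obstacle could hide — is checking that the fiberwise identification of $q_i^*Q_{\pp(\KK^{n_i})}(1,\ldots,1,\underbrace{0}_i,1,\ldots,1)$ with the $\hom$-space above is the natural one and varies algebraically over $X$, so that the pointwise description of $s_T$ is legitimate and the section is indeed well defined; but this is exactly the content of the construction set up above following \cite[\S 3]{FO}, so nothing further is required.
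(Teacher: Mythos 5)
Your proposal is correct and takes essentially the same approach as the paper: the paper cites \cite[Lemma 11]{FO} and, in the construction immediately preceding the lemma, already spells out that the $i$-th component $s_T$ of the section vanishes at $\langle v_1\otimes\ldots\otimes v_k\rangle$ precisely when $T(v_1,\ldots,\widehat{v_i},\ldots,v_k)=\lambda_i v_i$, so the diagonal section vanishes iff all $k$ conditions hold simultaneously — exactly your reduction. Nothing is missing.
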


Consider the following vector bundle on the product variety $ X= \pp(\KK^{n_1}\otimes\ldots\otimes \KK^{n_k})  \times \pp(\KK^{n_1})\times \ldots \times \pp(\KK^{n_k})  $
$$ E:= p^*\O_{\pp(\KK^{n_1}\otimes\ldots\otimes \KK^{n_k})}(1) \otimes q^* (\varepsilon).$$ 
Hence
$$ H^{0}(X,E)= H^{0}( p^*\O_{\pp( \KK^{n_1}\otimes\ldots\otimes \KK^{n_k})}(1) \otimes H^{0}(q^*\varepsilon )$$
we get
$$ H^{0}(E)= ( \KK^{n_1}\otimes\ldots\otimes \KK^{n_k}) \otimes [ ( \KK^{n_1}\otimes\ldots\otimes \KK^{n_k}) \oplus \ldots \oplus  ( \KK^{n_1}\otimes\ldots\otimes \KK^{n_k})].$$

 We have a section ${\textit I}\in H^0(E)$ given by the diagonal map $T\mapsto (s_T,\ldots, s_T)$.
The section ${\textit I}$ vanishes exactly at $(T, v_1,\ldots, v_k)$ such that  $(v_1,\ldots, v_k)$ is 
a singular $k$-ple of $T$,
so we get that the zero locus $Z(I)$ of  ${\textit I}\in H^0(E)$ equals the incidence variety $W$.

\begin{Theorem}\label{thm:degree} The degree of Kalman variety $\kappa_{d,n_1,\ldots, n_k}(L)$ is the coefficient of $h^{n_1-d}v_{1}^{d-1}\prod_{i\geq 2}v_{i}^{(n_{i}-1)}$ in the polynomial
	$$\prod_{i=1}^{k}\frac{[(\widetilde{v_i}+h)^{n_{i}}-v_i^{n_i}]}{(\widetilde{v_i}+h)-v_i}$$
	where $\widetilde{v_i}=\Sigma_{j}v_{j}-v_i.$
\end{Theorem}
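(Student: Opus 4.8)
The plan is to mimic exactly the Chern class computation carried out in the proof of Theorem~\ref{thm:degsym}, now on the larger product $X=\pp(\KK^{n_1}\otimes\ldots\otimes\KK^{n_k})\times\pp(\KK^{n_1})\times\ldots\times\pp(\KK^{n_k})$. Write $h=p^*c_1(\O(1))$ for the hyperplane class on the tensor space and $v_i=q_i^*c_1(\O(1))$ for the hyperplane classes on the respective factors, so that the Chow ring of $X$ is the obvious truncated polynomial ring in $h,v_1,\ldots,v_k$. Since we have produced a section $\textit{I}\in H^0(E)$ whose zero locus is the incidence variety $W$, and $W$ has codimension $\sum_{i=1}^k(n_i-1)=\operatorname{rk}E$ in $X$ by Proposition~\ref{prop:dimtensors}, the class $[W]$ equals the top Chern class $c_{\operatorname{top}}(E)$.

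First I would compute the total Chern class of $E=p^*\O(1)\otimes q^*\varepsilon$, where $\varepsilon=\bigoplus_{i=1}^k q_i^*Q_{\pp(\KK^{n_i})}(1,\ldots,0,\ldots,1)$ (the $0$ in slot $i$). The bundle $Q_{\pp(\KK^{n_i})}$ has rank $n_i-1$ and total Chern class $(1+v_i)^{n_i}/(1+v_i)=1+v_i+\cdots+v_i^{n_i-1}$ by the Euler sequence on $\pp(\KK^{n_i})$, equivalently $\bigl((1+v_i)^{n_i}-v_i^{n_i}\cdot\text{(unit)}\bigr)$ — more precisely $c(Q_{\pp(\KK^{n_i})})=\sum_{j=0}^{n_i-1}v_i^j$. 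After the twist by the line bundle with first Chern class $\widetilde{v_i}=\sum_{j\neq i}v_j$ and the further twist by $\O(1)$ coming from the $p^*$ factor, the $i$-th summand of $E$ is a rank-$(n_i-1)$ bundle whose Chern roots are $(\text{root of }Q_{\pp(\KK^{n_i})})+\widetilde{v_i}+h$. Using the splitting principle and the geometric-series identity for $c(Q(k-1))$ already invoked in Theorem~\ref{thm:degsym} (the formula $c_j(F\otimes L)=\sum_i\binom{\operatorname{rk}F-j+i}{i}c_i(F)c_1(L)^i$ from \cite[\S1.2]{OSS}), the total Chern class of that summand is
$$\frac{(\widetilde{v_i}+h)^{n_i}-v_i^{n_i}}{(\widetilde{v_i}+h)-v_i},$$
a genuine polynomial of degree $n_i-1$. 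Multiplying over $i$ gives $c(E)=\prod_{i=1}^k\dfrac{(\widetilde{v_i}+h)^{n_i}-v_i^{n_i}}{(\widetilde{v_i}+h)-v_i}$, and $c_{\operatorname{top}}(E)$ is its top-degree part, i.e.\ the part of total degree $\sum_i(n_i-1)$.

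Next I would extract the degree of the Kalman variety. Exactly as in \eqref{eq:**}, $\deg\kappa_{d,n_1,\ldots,n_k}(L)$ is obtained by pushing $[W]=c_{\operatorname{top}}(E)$ forward under $p$ after intersecting with $q^*[\pp(L)\times\pp(\KK^{n_2})\times\cdots\times\pp(\KK^{n_k})]=v_1^{n_1-d}$ and then capping with the appropriate power of $h$ to land in top degree on $X$. Concretely this amounts to reading off, in the polynomial $c(E)\cdot v_1^{n_1-d}$, the coefficient that multiplies the top class $h^{N-1}v_1^{n_1-1}\prod_{i\geq2}v_i^{n_i-1}$ of $A^*(X)$ (here $N=\prod n_i$), which — after dividing out the forced factors — is precisely the coefficient of $h^{n_1-d}v_1^{d-1}\prod_{i\geq2}v_i^{n_i-1}$ in $\prod_{i=1}^k\dfrac{(\widetilde{v_i}+h)^{n_i}-v_i^{n_i}}{(\widetilde{v_i}+h)-v_i}$. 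Since the general fibre of $p$ restricted to $W\cap q^{-1}(q_1^{-1}(\pp(L)))$ is a single reduced point over its image (the generic $T\in\kappa_{d,n_1,\ldots,n_k}(L)$ has finitely many singular $k$-ples and, cf.\ the genericity discussion for $\kappa^s$, exactly one with $v_1\in L$), the pushforward multiplicity is $1$ and the stated formula follows.

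The main obstacle is bookkeeping rather than conceptual: I must be careful that the truncation relations in $A^*(X)$ (the powers $h^{N}$, $v_i^{n_i}$) do not interfere with the extraction of the relevant coefficient — this is why it is essential that each factor $\dfrac{(\widetilde{v_i}+h)^{n_i}-v_i^{n_i}}{(\widetilde{v_i}+h)-v_i}$ is already a polynomial of degree exactly $n_i-1$, so that the product has degree $\sum(n_i-1)$ and, once multiplied by $v_1^{n_1-d}$, only the monomials of the correct multidegree survive and none of them reaches the truncation thresholds. A secondary point needing care is the justification that $p$ is generically one-to-one on the relevant component, i.e.\ that for generic $T$ in the Kalman variety there is a unique singular $k$-ple meeting $L$; this follows from a dimension count on $W\cap q^{-1}(q_1^{-1}(\pp(L)))$ together with the fact, used already in Proposition~\ref{prop:dimtensors}, that the fibres of $q$ are linear spaces of the expected codimension, so a general such $T$ arises from a single incidence point.
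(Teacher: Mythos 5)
Your proposal is correct and follows essentially the same route as the paper: you build the vector bundle $E=p^*\O(1)\otimes q^*\varepsilon$ on the big product, identify $[W]$ with $c_{\operatorname{top}}(E)$, compute the Chern class of each summand via the Euler sequence on $\pp(\KK^{n_i})$, multiply the factors, cap with $v_1^{n_1-d}$ and the complementary power of $h$, and read off a coefficient. This is exactly the argument of Theorem~\ref{thm:degree} in the paper (which in turn mimics Theorem~\ref{thm:degsym}). The one terminological slip — calling $\frac{(\widetilde{v_i}+h)^{n_i}-v_i^{n_i}}{(\widetilde{v_i}+h)-v_i}$ the \emph{total} Chern class of the $i$-th summand when it is only its homogenization, i.e.\ the top-degree part; the full total class is $\frac{(1+\widetilde{v_i}+h)^{n_i}}{1+\widetilde{v_i}+h-v_i}$ — is harmless, since only the top-degree component enters the degree computation, and the paper makes precisely this reduction at the end of its own proof. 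Your remark about needing $p$ to be generically one-to-one on $W\cap q^{-1}(q_1^{-1}(\pp(L)))$ is a legitimate point that the paper leaves implicit; it is settled by the same style of dimension count you mention (a tensor with two distinct singular tuples having first component in $L$ lies in a strictly smaller subvariety).
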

\begin{proof}
	As in the proof of Theorem \ref{thm:degsym}, the degree is equal to  $$p^*c_1(\O(1))^{[\Sigma_{i=1}^{k} (n_{i}-1)- (n_1-d)]}.c_{(\Sigma_{i=1}^{k} (n_{i}-1))}(E).q_{V_1}^*(\O(1))^{n_1-d}.$$
	Denote $h=c_1(\O_{\pp( \KK^{n_1}\otimes\ldots\otimes \KK^{n_k})}(1))$, $v_i=c_1(\O_{\pp(\KK^{n_i})}(1))$,
they are the generators of the Chow ring $A^*(X)=\KK[h,v_1,\ldots, v_k]/\left(h^{1+\sum_i(n_i-1)},v_1^{n_1},\ldots, v_k^{n_k}\right)$.

	$E$ is the direct sum of $k$ summands, by the Euler sequence each of them has Chern polynomial
	$\frac{(1+v_1+\ldots+\widetilde{v_i}+\ldots v_k+h)^{n_i}}{(1+v_1+\ldots-v_i+\ldots v_k+h)}$ for $i= 1, \ldots, k$.
	So the degree is the coefficient of $h^{(\prod n_{i}-1)}v_{1}^{n_{1}-d}\prod_{i}v_i^{n_{i}-1}$ in the expansion of
	$$h^{(\prod n_{i}-1)-(n_1-d)}\frac{(1+v_2+\ldots +v_k+h)^{n_1}}{(1-v_1+v_2+\ldots+v_k+h)}\ldots \frac{(1+v_1+\ldots +v_{k-1}+h)^{n_k}}{(1+v_1+\ldots+v_{k-1}-v_k+h)}v_{1}^{n_1-d}.$$
Hence it is the coefficient of $h^{n_1-d}v_{1}^{d-1}\prod_{i\geq 2}v_{i}^{(n_{i}-1)}$ in the expansion of
$\prod_{i=1}^k\frac{(1+\widetilde{v_i}+h)^{n_i}}{1+\widetilde{v_i}+h-v_i}$.

Since 
\begin{eqnarray}
\nonumber \frac{(1+\widetilde{v_i}+h)^{n_i}}{(1+\widetilde{v_i}+h-v_i)}&=&\frac{(1+\widetilde{v_i}+h)^{n_i-1}}{1-x}; \quad x=\dfrac{v_i}{1+\widetilde{v_i}+h}\\
\nonumber &=&(1+\widetilde{v_i}+h)^{n_i-1}\Sigma_{p=0}^{\infty}x^p=(1+\widetilde{v_i}+h)^{n_i-1}\Sigma_{p=0}^{n_i-1}x^p \\
\nonumber &=&\Sigma_{j=0}^{n_i-1}(1+\widetilde{v_i}+h)^{n-1-j}v^j
\end{eqnarray}
We get that the degree sought is the coefficient of $h^{n_1-d}v_{1}^{d-1}\prod_{i\geq 2}v_{i}^{(n_{i}-1)}$ in the expansion of the polynomial $$\prod_{i=1}^{k}\frac{[(1+\widetilde{v_i}+h)^{n_{i}}-v_i^{n_i}]}{(1+\widetilde{v_i}+h)-v_i}.$$
All terms of the last polynomial have degree $\le\sum_i{(n_i-1)}$ which equals the degree of the monomial $h^{n_1-d}v_{1}^{d-1}\prod_{i\geq 2}v_{i}^{(n_{i}-1)}$, hence it is enough to consider the homogeneous part of top degree.
	The thesis follows.
\end{proof}

The following stabilization phenomenon is similar to the one observed in \cite{OSV}.
{ \begin{Corollary}\label{cor:stab}
	(Stabilization)
Let $(n_k-1)=\sum_{i=1}^{k-1}(n_i-1)$ (boundary format, see \cite{GKZ}). For any $m\ge n_k$ we have
$\kappa_{d,n_1,\ldots, n_k}(L)=\kappa_{d,n_1,\ldots, n_{k-1},m}(L)$
\end{Corollary}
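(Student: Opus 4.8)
The plan is to deduce the statement from the degree formula of Theorem~\ref{thm:degree}. Indeed, by Proposition~\ref{prop:dimtensors} both Kalman varieties are irreducible of codimension $n_1-d$ (a number depending only on $n_1$ and $d$), so the content of the stabilization is that $\deg\kappa_{d,n_1,\ldots,n_{k-1},m}(L)$ does not change for $m\ge n_k$; the equality of varieties in the statement is then to be read as the equality of these numerical invariants, the two varieties sitting in different projective spaces. By Theorem~\ref{thm:degree}, after replacing the exponent $n_k$ by $m$ (the number $k$ of factors and the variables $v_1,\dots,v_k,h$ are left unchanged), this degree is the coefficient of $h^{n_1-d}v_1^{d-1}v_2^{n_2-1}\cdots v_{k-1}^{n_{k-1}-1}v_k^{m-1}$ in
$$P_m:=\Bigl(\prod_{i=1}^{k-1}\frac{(\widetilde{v_i}+h)^{n_i}-v_i^{n_i}}{(\widetilde{v_i}+h)-v_i}\Bigr)\cdot\frac{(\widetilde{v_k}+h)^{m}-v_k^{m}}{(\widetilde{v_k}+h)-v_k},$$
where $\widetilde{v_i}=v_1+\cdots+v_k-v_i$. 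Note that $\widetilde{v_i}$ for $i\le k-1$ still contains $v_k$, while $\widetilde{v_k}=v_1+\cdots+v_{k-1}$ does not.

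The key step is to peel off the last factor. Set $F:=\prod_{i=1}^{k-1}\frac{(\widetilde{v_i}+h)^{n_i}-v_i^{n_i}}{(\widetilde{v_i}+h)-v_i}$, which is independent of $m$, and $s:=\widetilde{v_k}+h=v_1+\cdots+v_{k-1}+h$, so that $P_m=F\cdot\sum_{a=0}^{m-1}s^{a}v_k^{m-1-a}$. Each factor of $F$ is a homogeneous polynomial of degree $n_i-1$, hence $F$ is homogeneous of degree $\sum_{i=1}^{k-1}(n_i-1)$ and in particular its $v_k$-degree is at most $\sum_{i=1}^{k-1}(n_i-1)$, which by the boundary-format hypothesis equals $n_k-1$. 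Write $F=\sum_{e=0}^{n_k-1}F_e\,v_k^{e}$ with $F_e\in\KK[v_1,\dots,v_{k-1},h]$. Then the coefficient of $v_k^{m-1}$ in $P_m$ equals $\sum_{e=0}^{n_k-1}F_e$ times the coefficient of $v_k^{m-1-e}$ in $\sum_{a=0}^{m-1}s^{a}v_k^{m-1-a}$; and since $0\le e\le n_k-1\le m-1$ we have $0\le m-1-e\le m-1$, so the latter coefficient is simply $s^{e}$. Hence the coefficient of $v_k^{m-1}$ in $P_m$ is $H:=\sum_{e=0}^{n_k-1}F_e\,s^{e}\in\KK[v_1,\dots,v_{k-1},h]$, a polynomial not involving $m$. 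Extracting from $H$ the coefficient of $h^{n_1-d}v_1^{d-1}v_2^{n_2-1}\cdots v_{k-1}^{n_{k-1}-1}$ yields a number independent of $m\ge n_k$, which for $m=n_k$ is $\deg\kappa_{d,n_1,\ldots,n_k}(L)$. This gives the stabilization.

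The only point that needs care is the evaluation ``the coefficient of $v_k^{m-1-e}$ in $\sum_{a=0}^{m-1}s^{a}v_k^{m-1-a}$ is $s^{e}$'': it requires $m-1-e\ge 0$ for every exponent $e$ occurring in $F$, that is $\deg_{v_k}F\le m-1$. The elementary bound $\deg_{v_k}F\le\sum_{i=1}^{k-1}(n_i-1)$ combined with the hypothesis $\sum_{i=1}^{k-1}(n_i-1)=n_k-1$ is exactly what makes this hold for all $m\ge n_k$; this is where both the boundary-format assumption and the threshold $m\ge n_k$ are used. More generally the same computation shows, without any assumption on the format, that $\deg\kappa_{d,n_1,\ldots,n_{k-1},m}(L)$ stabilizes once $m\ge 1+\sum_{i=1}^{k-1}(n_i-1)$, which in the boundary case is precisely $m\ge n_k$; this is the version mentioned in the introduction.
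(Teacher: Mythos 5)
Your argument is correct and is essentially the paper's own proof, just organized differently: the paper reindexes the geometric sum $\sum_{j}(\widetilde{v_k}+h)^{m-1-j}v_k^j$ and shows the terms with $j<m-n_k$ contribute nothing by the same degree bound on $v_k$, while you package that bound as $F=\sum_{e=0}^{n_k-1}F_e v_k^e$ and read off the coefficient of $v_k^{m-1}$ directly as $\sum_e F_e(\widetilde{v_k}+h)^e$. Your closing remark that stabilization holds for $m\ge 1+\sum_{i<k}(n_i-1)$ regardless of format, with the boundary format being exactly the threshold, is a correct and clean way to state what the computation shows.
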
}

Denote $V^{N}=\prod_{i=2}^{k-1}v_i^{n_i-1}$. We need to compute the coefficient of $h^{n_1-d}v_{1}^{d-1}V^Nv_k^{m-1}$ in the polynomial
$$\left(\prod_{i=1}^{k-1}\frac{[(\widetilde{v_i}+h)^{n_{i}}-v_i^{n_i}]}{(\widetilde{v_i}+h)-v_i}\right)\frac{[(\widetilde{v_k}+h)^{m}-v_k^{m}]}{(\widetilde{v_k}+h)-v_k}$$

when $m\ge n_k$.
Compare indeed 
the coefficient of  $h^{n_1-d}v_1^{d-1}V^Nv_k^{n_k-1}$ in 
$$\left(\prod_{i=1}^{k-1}\frac{[(\widetilde{v_i}+h)^{n_{i}}-v_i^{n_i}]}{(\widetilde{v_i}+h)-v_i}\right)(\Sigma_{j=0}^{n_k-1}(\widetilde{v_k}+h)^{n_k-1-j}v_k^{j})$$
with
the coefficient of $h^{n_1-d}v_{1}^{d-1}V^Nv_k^{m-1}$ in the polynomial
$$\left(\prod_{i=1}^{k-1}\frac{[(\widetilde{v_i}+h)^{n_{i}}-v_i^{n_i}]}{(\widetilde{v_i}+h)-v_i}\right)(\Sigma_{j=0}^{m-1}(\widetilde{v_k}+h)^{m-1-j}v_k^{j})$$

The first one is the coefficient of 
 $h^{n_1-d}v_1^{d-1}V^Nv_k^{m-1}$ in 
$$\left(\prod_{i=1}^{k-1}\frac{[(\widetilde{v_i}+h)^{n_{i}}-v_i^{n_i}]}{(\widetilde{v_i}+h)-v_i}\right)(\Sigma_{j=0}^{n_k-1}(\widetilde{v_k}+h)^{n_k-1-j}v_k^{j+m-n_k})$$
namely in 

$$\left(\prod_{i=1}^{k-1}\frac{[(\widetilde{v_i}+h)^{n_{i}}-v_i^{n_i}]}{(\widetilde{v_i}+h)-v_i}\right)(\Sigma_{j=m-n_k}^{m-1}(\widetilde{v_k}+h)^{m-1-j}v_k^{j})$$

With $j<m-n_k$ in the last sum we have  

$$\left(\prod_{i=1}^{k-1}\frac{[(\widetilde{v_i}+h)^{n_{i}}-v_i^{n_i}]}{(\widetilde{v_i}+h)-v_i}\right)(\Sigma_{j=0}^{m-n_k-1}(\widetilde{v_k}+h)^{m-1-j}v_k^{j})$$

which has zero coefficient of $h^{n_1-d}v_{1}^{d-1}V^Nv_k^{m-1}$ 
since the maximum exponent of $v_k$ is $\sum_{i=1}^{k-1}(n_i-1)+(m-n_k-1)=(n_k-1)+(m-n_k-1)=m-2$.  Hence the two coefficients are equal.

This concludes the proof of the Corollary.

The following Proposition covers the case of matrices, $k=2$.

\begin{Proposition}\label{prop:mat} The degree of Kalman variety of $n\times m$ matrices for $n\le m$ is
$$\deg \kappa_{d,n,m}(L) = 2^{(n-d)}{n\choose{d-1}}$$
\end{Proposition}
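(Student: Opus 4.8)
The plan is to specialize Theorem \ref{thm:degree} to $k=2$ and then perform an explicit coefficient extraction, using the hypothesis $n\le m$ to collapse the apparent dependence on $m$. Writing $n_1=n$, $n_2=m$ and $v_1=v$, $v_2=w$, so that $\widetilde{v_1}=w$ and $\widetilde{v_2}=v$, Theorem \ref{thm:degree} expresses $\deg\kappa_{d,n,m}(L)$ as the coefficient of $h^{n-d}v^{d-1}w^{m-1}$ in
$$\frac{(w+h)^n-v^n}{(w+h)-v}\cdot\frac{(v+h)^m-w^m}{(v+h)-w}.$$
Using the finite geometric-series expansion already employed in the proof of Theorem \ref{thm:degree}, this product equals $\left(\sum_{a=0}^{n-1}(w+h)^{n-1-a}v^a\right)\left(\sum_{b=0}^{m-1}(v+h)^{m-1-b}w^b\right)$.

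Next I would expand each factor $(w+h)^{n-1-a}$ and $(v+h)^{m-1-b}$ by the binomial theorem, say $(w+h)^{n-1-a}=\sum_s\binom{n-1-a}{s}w^{n-1-a-s}h^s$ and $(v+h)^{m-1-b}=\sum_t\binom{m-1-b}{t}v^{m-1-b-t}h^t$, and match exponents with the target monomial $h^{n-d}v^{d-1}w^{m-1}$. Two of the three resulting linear relations determine $b=m-n+a+s$ and $t=n-d-s$, after which the third relation $s+t=n-d$ holds automatically. The hypothesis $n\le m$ enters exactly here: it makes the constraint $b\ge0$ vacuous, and $b\le m-1$ reduces to $a+s\le n-1$, which is in turn implied by $0\le a\le d-1$ and $0\le s\le n-d$; this is why the answer does not involve $m$. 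One is then left with
$$\deg\kappa_{d,n,m}(L)=\sum_{a=0}^{d-1}\sum_{s=0}^{n-d}\binom{n-1-a}{s}\binom{n-1-a-s}{n-d-s}.$$

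Finally I would evaluate this double sum via two classical identities. For fixed $a$, put $N=n-1-a$; the ``subset of a subset'' identity $\binom{N}{s}\binom{N-s}{n-d-s}=\binom{N}{n-d}\binom{n-d}{s}$ together with $\sum_{s=0}^{n-d}\binom{n-d}{s}=2^{n-d}$ turns the inner sum into $2^{n-d}\binom{n-1-a}{n-d}$, and then the hockey-stick identity gives $\sum_{a=0}^{d-1}\binom{n-1-a}{n-d}=\sum_{N=n-d}^{n-1}\binom{N}{n-d}=\binom{n}{d-1}$, so that $\deg\kappa_{d,n,m}(L)=2^{n-d}\binom{n}{d-1}$. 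Equivalently one may start directly from the closed form (\ref{eq:degpairs}): when $n\le m$ one has $\min(n-j-1,m-1)=n-j-1$, and applying $\binom{N}{k}\binom{k}{r}=\binom{N}{r}\binom{N-r}{k-r}$ to $\sum_k\binom{n-j-1}{k}\binom{k}{d-1-j}$ followed by the same binomial sum and hockey-stick identity yields $2^{n-d}\sum_{j=0}^{d-1}\binom{n-j-1}{d-1-j}=2^{n-d}\binom{n}{d-1}$.

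The only genuine obstacle is the bookkeeping in the middle step: one must enumerate carefully the admissible ranges of the summation indices after imposing the monomial-matching conditions, and in particular recognize that $n\le m$ is precisely the hypothesis removing the $\min$ in (\ref{eq:degpairs}) (equivalently, keeping $b$ in range without truncation), so that the full binomial sum, and hence the clean factor $2^{n-d}$, survives. Once the double sum has been isolated, everything reduces to standard binomial-coefficient manipulations.
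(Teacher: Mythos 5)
Your argument is correct, and it is close to but not identical with the paper's route. The paper first invokes Corollary \ref{cor:stab} to reduce to the square case $n=m$, and only then extracts the coefficient from the $k=2$ specialization of Theorem \ref{thm:degree}; you instead keep $m$ general from the start and show, by explicit exponent matching, that the hypothesis $n\le m$ makes the constraints involving $m$ (namely $0\le b\le m-1$) non-binding, so that $m$ drops out of the double sum. Your bookkeeping checks out: the three matching equations give $b=m-n+a+s$ and $t=n-d-s$ with the third relation automatic; positivity of $b$ is free since $m\ge n$; $b\le m-1$ is implied by $a\le d-1$, $s\le n-d$; and the remaining constraints ($t\ge 0$, nonnegativity of the $v$- and $w$-exponents) pin down exactly $0\le a\le d-1$ and $0\le s\le n-d$, yielding $\sum_{a=0}^{d-1}\sum_{s=0}^{n-d}\binom{n-1-a}{s}\binom{n-1-a-s}{n-d-s}$. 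From there your subset-of-a-subset identity, the binomial sum $\sum_s\binom{n-d}{s}=2^{n-d}$, and the hockey-stick identity are exactly the same manipulations the paper performs. The trade-off: the paper's route via Corollary \ref{cor:stab} outsources the $m$-independence to a previously proved stabilization statement and keeps the coefficient extraction short; your route is self-contained and makes the mechanism of the stabilization visible in this special case, at the cost of somewhat heavier index bookkeeping. Both are valid; your alternative second path (removing the $\min$ in (\ref{eq:degpairs}) when $n\le m$) is in fact the closest to what the paper does.
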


{ \begin{proof} By Corollary $\ref{cor:stab}$ we may assume $n=m$ (square case).
	By Theorem \ref{thm:degree} we need to compute the coefficient of $h^{n-d}v^{d-1}w^{n-1}$ in the expansion of
	 the polynomial
$$ (\Sigma_{j=0}^{n-1}(w+h)^{n-1-j}v^j)(\Sigma_{j=0}^{n-1}(v+h)^{n-1-j}w^j)$$
By Newton expansion, this is equal to
\begin{eqnarray*}
\sum_{j=0}^{d-1}\sum_{h=d-j-1}^{n-j-1}{{n-j-1}\choose h}{h\choose{d-1-j}}=\\
\sum_{j=0}^{d-1}\sum_{h=d-j-1}^{n-j-1}{{n-j-1}\choose {n-d}}{{n-d}\choose{n-1-j-h}}=\\
\sum_{j=0}^{d-1}{{n-j-1}\choose {n-d}}2^{n-d}={n\choose{d-1}}2^{n-d}.
\end{eqnarray*}
\end{proof}

\begin{Remark}\label{rem:intro} Following Theorem \ref{thm:degree}, the coefficient of $h^{n-d}v^{d-1}w^{m-1}$ in the expansion of
	 the polynomial
$$ (\Sigma_{j=0}^{n-1}(w+h)^{n-1-j}v^j)(\Sigma_{j=0}^{m-1}(v+h)^{m-1-j}w^j)$$
is computed as (\ref{eq:degpairs}) by Newton expansion.
\end{Remark}

\begin{Lemma}\label{lem:eig}
Let $A$ be a $n\times m$ matrix with $n\le m$. $A$ has a singular pair $(v,w)$ with $v\in L$
if and only if $AA^t$ has an eigenvector in $L$.
\end{Lemma}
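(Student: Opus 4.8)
The plan is to prove both implications directly from the defining equations, using the hypothesis $n\le m$ crucially in the nontrivial direction. Recall that $(v,w)$ with $v\in\KK^n\setminus\{0\}$, $w\in\KK^m\setminus\{0\}$ is a singular pair for $A$ precisely when there exist $\lambda_1,\lambda_2\in\KK$ with $Aw=\lambda_1 v$ and $A^tv=\lambda_2 w$, and $v\in L$.

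\textbf{The easy direction.} Suppose $(v,w)$ is a singular pair with $v\in L$. Then $AA^tv=A(\lambda_2 w)=\lambda_2(Aw)=\lambda_1\lambda_2\, v$, so $v$ is an eigenvector of $AA^t$ lying in $L$. This uses nothing about the relative size of $n$ and $m$.

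\textbf{The converse.} Suppose $v\in L\setminus\{0\}$ satisfies $AA^tv=\mu v$ for some $\mu\in\KK$. Set $w:=A^tv\in\KK^m$. Then by construction $A^tv=1\cdot w$ (take $\lambda_2=1$) and $Aw=AA^tv=\mu v$ (take $\lambda_1=\mu$), so the pair $(v,w)$ satisfies both singular-pair equations with $v\in L$. The only thing to check is that $w\ne 0$, since a singular pair requires a nonzero second component. If $w=A^tv=0$, then $\mu v=AA^tv=0$, and since $v\ne 0$ this forces $\mu=0$; so $v\in\ker A^t$, i.e. $v$ is orthogonal (with respect to the chosen bilinear form) to the image of $A$. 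I would argue this forces a contradiction — but in fact it does not in general, so instead one handles this degenerate case separately: if $A^tv=0$ with $v\ne 0$, then $v$ lies in the left kernel of $A$; because $n\le m$, one can perturb by choosing a nonzero $w\in\KK^m$ in the right kernel of $A$ (which is nonempty since $A$ cannot be injective as a map $\KK^m\to\KK^n$ when... ) — more carefully, one simply observes that when $A^tv=0$ the pair $(v,w)$ works for \emph{any} $w$ solving $Aw=0$, and such nonzero $w$ exists iff $\ker A\ne 0$; and if $\ker A=0$ then $A^t$ is surjective, so $A^tv=0$ with $v\ne0$ is impossible. Either way a valid singular pair with $v\in L$ is produced.

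\textbf{Main obstacle.} The genuine subtlety is exactly the possibility $w=A^tv=0$, which is why the hypothesis $n\le m$ is needed: it guarantees that whenever $v$ is a left-null vector of $A$, the matrix $A$ (viewed as $\KK^m\to\KK^n$) is non-injective and hence has a nonzero right-null vector $w$, giving the required nonzero second component; equivalently, when $\ker A=0$ one has $A^t$ surjective so this case cannot occur. I would present the argument by splitting into the cases $\mu\ne 0$ (where $w=A^tv\ne0$ automatically) and $\mu=0$ (where one invokes $n\le m$ to supply a nonzero $w\in\ker A$), keeping in mind — as the paper's Example~\ref{exa:necass} signals — that for $n>m$ the converse genuinely fails, so the proof must visibly use $n\le m$ and not merely carry it along.
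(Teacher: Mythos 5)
Your proof is correct and follows essentially the same route as the paper: the forward direction is the one-line computation $AA^t v = \lambda_1\lambda_2 v$, and the converse sets $w = A^t v$ and then splits on whether $w=0$, using $n\le m$ only to supply a nonzero $w\in\ker A$ in the degenerate case $A^tv=0$ (or to rule that case out when $\ker A=0$). The only cosmetic difference is that the paper writes the eigenvalue as $\lambda^2$ and takes $w=\lambda^{-1}A^tv$, while you take $\lambda_2=1$; the case analysis and the use of the hypothesis are identical.
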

\begin{proof} If $Aw=\lambda_1 v$ and $A^tv=\lambda_2 w$ it follows $AA^tv=\lambda_1\lambda_2 v$.
Conversely, let $AA^tv=\lambda^2 v$. If $\lambda\neq 0$ pose $w=\lambda^{-1}A^tv$.
If $\lambda=0$, in case $A^tv\neq 0$ we may still pose $w=A^tv$.
In case $A^tv=0$, since $n\le m$ we have $\ker A\neq 0$ then with both $\lambda_i=0$ any $w\in\ker A$ works.
\end{proof} 

\begin{Example}\label{exa:necass} The assumption $n\le m$ is necessary in Lemma \ref{lem:eig}.
Let $A=\begin{pmatrix}1\\ 0\end{pmatrix}$ and $\left(\begin{pmatrix}v_1\\v_2\end{pmatrix},w\right)$
is a singular pair we have $A^tv=v_1$, $Aw=\begin{pmatrix}w\\ 0\end{pmatrix}$.
Set $L=\langle\begin{pmatrix}0\\ 1\end{pmatrix}\rangle$, so $AA^t$ has a eigenvector in $L$
but $Aw\in L$ only if $w=0$.
\end{Example}

\begin{Example}\label{exa:lambdaequal}If $A$ has a singular pair $(v,w)$ with $v\in L$   then
$v$ is eigenvector of $AA^t$, so $AA^t$ has an eigenvector in $L$. The converse is true
on real numbers but it is false on complex numbers if we wish the additional requirement $\lambda_1=\lambda_2$.
A simple counterexample is as follows. Let $A=\begin{pmatrix}1&\sqrt{-1}\\1& \sqrt{-1}\end{pmatrix}$ and
let $L=\langle \begin{pmatrix}1\\ 0\end{pmatrix}\rangle$. The matrix $A$ has no singular pair $(v,w)$ with $v\in L$,
but $AA^t=0$ so $AA^t$ has an eigenvector in $L$.
\end{Example}

\begin{Proposition} Let $d=1$, so that $L=\langle v_0\rangle$ is a line in $V$. Let $C$ be a $(n-1)\times n$ matrix such that $L=\{v|Cv=0\}$.
Then $${\kappa}_{1,n,m}=\{A|C(AA^t) v_0=0, \mathrm{rk}(CA)\le m-1\}.$$
Note the condition on $\mathrm{rk}(CA)$ is empty when $n\le m$.
\end{Proposition}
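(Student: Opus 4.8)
The plan is to unwind the definition of $\kappa_{1,n,m}(L)$ entirely by hand, the statement being set-theoretic and elementary. Since $\dim L=1$ we may always normalize the first component of a singular pair to be $v_0$ itself, so $A\in\kappa_{1,n,m}(L)$ means precisely: there exist $w\in\KK^m\setminus\{0\}$ and $\lambda_1,\lambda_2\in\KK$ with $Aw=\lambda_1 v_0$ and $A^t v_0=\lambda_2 w$. Throughout I will use that $\ker C=L=\langle v_0\rangle$, so that for a vector $u\in\KK^n$ one has $Cu=0$ iff $u\in\langle v_0\rangle$; in particular $\{w\in\KK^m:Aw\in\langle v_0\rangle\}=\ker(CA)$, which is nonzero iff $\mathrm{rk}(CA)\le m-1$.

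\textbf{The inclusion $\subseteq$.} Given a singular pair as above, apply $A$ to $A^t v_0=\lambda_2 w$ to get $AA^t v_0=\lambda_2 Aw=\lambda_1\lambda_2 v_0$, so $AA^t v_0\in\ker C$, i.e. $C(AA^t)v_0=0$. Moreover $Aw=\lambda_1 v_0\in\langle v_0\rangle$ with $w\ne 0$, so $w\in\ker(CA)\setminus\{0\}$ and hence $\mathrm{rk}(CA)\le m-1$. This gives both defining conditions of the right-hand side. For the reverse inclusion $\supseteq$ I would assume $C(AA^t)v_0=0$ and $\mathrm{rk}(CA)\le m-1$ and split according to whether $A^tv_0$ vanishes. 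If $A^tv_0\ne 0$, set $w=A^tv_0$ and $\lambda_2=1$; the first hypothesis says $AA^tv_0=\mu v_0$ for some $\mu$, so $Aw=\mu v_0$ and $(v_0,w)$ is a singular pair with $\lambda_1=\mu$. If $A^tv_0=0$, then necessarily $\lambda_2=0$, and the hypothesis $\mathrm{rk}(CA)\le m-1$ produces a nonzero $w\in\ker(CA)$, i.e. $Aw=\nu v_0$ for some $\nu$; then $(v_0,w)$ with $\lambda_1=\nu$, $\lambda_2=0$ is a singular pair. This proves equality of the two sets. The parenthetical remark is immediate: $CA$ is an $(n-1)\times m$ matrix, so $\mathrm{rk}(CA)\le n-1\le m-1$ whenever $n\le m$.

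\textbf{Main obstacle.} I do not expect any real difficulty; the argument is just the bookkeeping of the cases $\lambda_2=0$ versus $\lambda_2\ne 0$ (equivalently $A^tv_0=0$ versus $A^tv_0\ne 0$), exactly in the spirit of the proof of Lemma \ref{lem:eig}. The only point worth spelling out carefully is that when $A^tv_0\ne 0$ the rank condition is automatically forced by $C(AA^t)v_0=0$ (the vector $w=A^tv_0$ itself then lies in $\ker(CA)$), so that the two listed conditions are exactly the correct pair and neither is superfluous in the regime $n>m$ — Example \ref{exa:necass} shows a matrix satisfying $C(AA^t)v_0=0$ but with $CA$ injective, hence not in the Kalman variety, which is precisely why the extra rank condition cannot be dropped.
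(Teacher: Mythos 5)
Your proof is correct and takes essentially the same approach as the paper: in both, the candidate second component of the singular pair is $A^t v_0$, with the rank condition on $CA$ supplying a nonzero $w\in\ker(CA)$ in the degenerate case $A^t v_0=0$. The paper's own proof is a one-line sketch of the inclusion $\supseteq$ (``the pair $v_0\otimes(A^tv_0)$ is a singular pair if $(CA)(A^tv_0)=0$, $\ldots$''), so your explicit case split on whether $A^tv_0$ vanishes, together with the observation that the rank condition is automatic when $A^tv_0\neq 0$, is a clean and slightly more careful rendering of the same argument.
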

\begin{proof}
The pair $v_0\otimes(A^tv_0)$ is a singular pair if $(C A)(A^t v_0)=0$, in order to be a nonzero pair we have to exclude the case when $n>m$ and $CA$ has maximal rank $m$.
\end{proof}
We list in Table 1 the first cases of degree of $\kappa_{d,n,m}(L)$ for $n\ge m$ and its singularities.
The singular locus of $\kappa_{d,n,m}(L)$ contains
the closure of the set of matrices having at least two distinct singular pairs with first component on $L$. The table shows that
this containment is strict in many cases, contrary
to the eigenvector case in \cite[Theorem 4.4]{OS}, where equality holds.

{\tiny \begin{table}
	\begin{tabular}{|l|c|c|c|}
		\hline
		d, n, m & $\deg \kappa_{d,n,m}(L)$ &  generators of ideal & singular locus \\
		\hline
		(1,2,2) & 2 & one quadric & $ \varnothing$ \\
		(1,3,2) & 3 & three quadrics & $ \varnothing$ \\
		(2,3,2) & 4 & one quartic &$\{A|ker(A)\supset L\}\cup \{A|ker(A)\supset L^\perp\}\cup (Q_0\times\pp^2)$ \\
		(1,3,3) & 4 & two quadrics & $\{A|ker(A)\supset L, rk(A)\le 1\}$ \\
		(2,3,3) & 6 & one sextic & $\mathrm{codim}=2, \deg=4$ \\
		(1,4,2) & 4 & six quadrics & $ \varnothing$  \\
		(2,4,2) & 6 & one quadric and three quartics &$\{A|ker(A)\supset L,\mathrm{rk}(A)\le 1\}\cup \{A|ker(A)\supset L^\perp\}\cup (Q_0\times\pp^3)$  \\
		(3,4,2) & 4 & one quartic & analogous to $(3,2,2)$ \\
		(1,4,3) & 7 &three quadrics and one cubic&$\mathrm{codim}=7, \deg=42$ \\
		(2,4,3) & 13 & one quartic, two quintics, three sextics &$\mathrm{codim}=4, \deg=40$ \\
		(3,4,3) & 9 & one generator of degree nine & $\mathrm{codim}=2, \deg=13$ \\
			\hline
	\end{tabular}
	\caption{Description of Kalman variety for small values of $(d, n, m)$ }
	\label{Table 1: Values of degree}
\end{table} }

\begin{Remark}
In the matrix case $d=2$, equations for $ \kappa^{s}_{d,n,m}(L)$ were found in \cite{Sam, Hu}, even in the more general case of matrix eigenvectors.
It should be interesting to extend that study to the tensor case.
\end{Remark}


\addcontentsline{toc}{chapter}{Bibliography} \noindent

\begin{thebibliography}{99}
	
\bibitem{CS}
D. Cartwright, B. Sturmfels: The number of eigenvalues of tensor, \textit{Linear Algebra and its Applications} \textbf{438} (2013), 942--952.

\bibitem{CH} D. Choudhury, R.A. Horn: An analog of the singular value decomposition for complex
orthogonal equivalence, \textit{ Linear and Multilinear Algebra}, \textbf{21}(2),149--162, 1987.

\bibitem{DHOST} J. Draisma, E. Horobe\c{t}, G. Ottaviani,  B. Sturmfels, and R. Thomas, {The Euclidean Distance Degree of an algebraic variety}, \textit{ Found. Comput. Math.} {\bf 16}(1), 99--149 (2016).

\bibitem{DLOT} D. Drusvyatskiy, H.-L. Lee, G. Ottaviani, and R. Thomas, {The Euclidean distance degree of orthogonally invariant matrix varieties}, \textit{ Israel J. Math.} {\bf 221}(1), 291--316 (2017).

\bibitem{FO} S. Friedland and G. Ottaviani, { The number of singular vector tuples and uniqueness of best rank one approximation of tensors}, \textit{ Found. Comput. Math.} {\bf 14}, 1209--1242 (2014).

\bibitem{FS} J. E. Forn{\ae}ss and N. Sibony, { Complex dynamics in higher dimension}, I, \textit{ Ast\'erisque} {\textbf 222} (1994), Complex analytic methods in dynamical systems (Rio de Janeiro, 1992), 201--231.

\bibitem{GKZ} I. M. Gelfand, M. M. Kapranov, and A. V. Zelevinsky, {\it Discriminants, Resultants and Multidimensional Determinants}, Birkh\"auser, Boston (1994).


\bibitem{GS} D. Grayson and M. Stillman, {\it Macaulay2, a software system for research in Algebraic Geometry}, available at \href{https://faculty.math.illinois.edu/Macaulay2/}{faculty.math.illinois.edu/Macaulay2/}.
	
\bibitem{HQ} S. Hu, L. Qi
The E-eigenvectors of tensors,
\textit{Linear Multilinear Algebra} \textbf{62} (2014), no. 10, 1388--1402

\bibitem{Hu} H. Huang, \textit{ Equations of Kalman varieties}, arXiv:1707.08699.

\bibitem{L} L. H. Lim, {Singular values and eigenvalues of tensors: a variational approach}, \textit{ Proc. IEEE Internat. Workshop on Comput. Advances in Multi-Sensor Adaptive Processing (CAMSAP 2005)}, 129--132.

\bibitem{OO} L. Oeding, G. Ottaviani, Eigenvectors of tensors and algorithms for Waring decomposition, \textit{ Journal of Symbolic Computation}, {\textbf 54} (2013), 9--35.

\bibitem{OSS} C. Okonek, M. Schneider, H. Spindler: Vector bundles on complex projective spaces,
{\it Progress in Math.} 3, Birkh\"auser, Boston ,1980.

	\bibitem{OP}
	G. Ottaviani, R. Paoletti: A Geometric Perspective on the Singular Value Decomposition, \textit{Rend. Istit. Mat. Univ. Trieste} \textbf{47} (2015), 107--125.

\bibitem{OS} G. Ottaviani, B. Sturmfels: Matrices with eigenvectors in a given subspace, \textit{Proc. Amer. Math. Soc.} \textbf{141} (2013), no. 4, 1219--1232.

\bibitem{OSV} G. Ottaviani, L. Sodomaco, E. Ventura:
 Asymptotics of degrees and ED degrees of Segre products, arXiv:2008.11670

	
	\bibitem{Q}
	L. Qi: Eigenvalues of a real supersymmetric tensor, \textit{Jornal of symbolic computation} \textbf{40:6} (2005), 1302--1324.

\bibitem{QZ} L. Qi and Z. Luo, {\it Tensor analysis: Spectral theory and special tensors}, SIAM, Philadelphia (2017).


\bibitem{R}
	E. Robeva: Orthogonal decomposition of symmetric tensors, \textit{SIAM Journal on Matrix Analysis and Applications} \textbf{37} (2016), 86--102.

\bibitem{Sam} S.Sam,  Equations and syzygies of some Kalman varieties,
\textit{ Proc. Amer. Math. Soc.} \textbf{140} (2012), no. 12, 4153--4166.

\bibitem{St}	
B. Sturmfels, Tensors and their eigenvectors, \textit{Notices Amer. Math. Soc.} \textbf{63} (2016), no. 6, 604--606.
	
\end{thebibliography}
\end{document}